\renewcommand{\[}{\begin{eqnarray*}}
\renewcommand{\]}{\end{eqnarray*}}
\newtheorem{thm}{Theorem}[section]
\begin{document}

\title{A refinement of Bennett's inequality with applications to
  portfolio optimization}

\author{Tony Jebara}

\maketitle

\begin{abstract}
  A refinement of Bennett's inequality is introduced which is strictly
  tighter than the classical bound. The new bound establishes the
  convergence of the average of independent random variables to its
  expected value. It also carefully exploits information about the
  potentially heterogeneous mean, variance, and ceiling of each random
  variable. The bound is strictly sharper in the homogeneous setting
  and very often significantly sharper in the heterogeneous
  setting. The improved convergence rates are obtained by leveraging
  Lambert's ${\cal W}$ function. We apply the new bound in a portfolio
  optimization setting to allocate a budget across investments with
  heterogeneous returns.
\end{abstract}

\section{Introduction}

This article studies how the sum of scalar independent random
variables can deviate from its expected value. It extends previous
bounds from Hoeffding \cite{Hoe63} which exploited information about
the range of each random variable. It also outperforms previous bounds
from Bennett \cite{Ben62} which also incorporates information about
the variance of each random variable as well as a single fixed range
on all random variables. The proposed bound simultaneously leverages
information about the heterogeneous means, variances and ranges of the
random variables to obtain better convergence rates in a wider range
of useful settings. These rates give tighter guarantees on the
probability a given portfolio will underperform. This permits improved
portfolio optimization in settings where second-order information is
available while parametric information remains unavailable (as we
shall show in Section~\ref{sct:applications}).

Recall Hoeffding's celebrated inequality \cite{Hoe63}.
\begin{thm}[Hoeffding, 1963]
\label{thm:hoeffding}
Given $X_1,\ldots,X_n$ scalar independent random variables
bounded\footnote{Throughout this article, statements of the form $L_i
  \leq X_i \leq M_i$ will be assumed to be equivalent to $\Pr(X_i \in
  [L_i,M_i])=1.$ Furthermore, we will denote the probability of
  deviating by the shorthand $\Pr$.} as $L_i \leq X_i \leq M_i$ for
$i=1,\ldots,n$ then
\[
\Pr\left ( \frac{1}{n} \sum_{i=1}^n X_i -  \frac{1}{n} \sum_{i=1}^n \mathrm{E}[X_i ] \geq t \right ) \leq \exp \left (
-2 \frac{n^2 t^2}{\sum_{i=1}^n (M_i-L_i)^2 }
\right ).
\]
\end{thm}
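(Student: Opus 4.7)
The plan is to use the standard Chernoff-bounding approach (Markov's inequality applied to the exponential moment). First I would introduce a free parameter $s>0$ and write
\[
\Pr\left(\sum_{i=1}^n (X_i - \mathrm{E}[X_i]) \geq nt \right) = \Pr\left(e^{s \sum_i (X_i - \mathrm{E}[X_i])} \geq e^{snt}\right) \leq e^{-snt} \prod_{i=1}^n \mathrm{E}\bigl[e^{s(X_i - \mathrm{E}[X_i])}\bigr],
\]
using Markov's inequality and independence of the $X_i$ to factor the expectation of the product into a product of expectations.

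The next step, which is the main technical obstacle, is to bound each single-variable moment generating function $\mathrm{E}[e^{s Y_i}]$ for the centered variable $Y_i = X_i - \mathrm{E}[X_i]$, which lies in $[L_i - \mathrm{E}[X_i],\, M_i - \mathrm{E}[X_i]]$ and has mean zero. This is the content of Hoeffding's lemma, which I would prove by using convexity of $y \mapsto e^{sy}$ to dominate it on the interval by the chord connecting its values at the endpoints; after taking expectations (and using $\mathrm{E}[Y_i]=0$), what remains is a function of $s$ that can be handled by a Taylor expansion of its logarithm to obtain the clean bound
\[
\mathrm{E}\bigl[e^{sY_i}\bigr] \leq \exp\!\left(\frac{s^2 (M_i - L_i)^2}{8}\right).
\]
The coefficient $1/8$ here is what eventually produces the $2$ in the exponent of the final statement; getting that constant right is the delicate part.

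Plugging the bound on each MGF back into the product and taking logarithms yields
\[
\Pr\left(\frac{1}{n}\sum_{i=1}^n X_i - \frac{1}{n}\sum_{i=1}^n \mathrm{E}[X_i] \geq t\right) \leq \exp\!\left(-snt + \frac{s^2}{8}\sum_{i=1}^n (M_i - L_i)^2\right).
\]
Finally I would optimize over the free parameter $s>0$: differentiating the exponent and setting the derivative to zero gives the optimal choice $s^{*} = 4nt / \sum_{i=1}^n (M_i - L_i)^2$, and substituting $s^*$ back in produces exactly the claimed exponent $-2 n^2 t^2 / \sum_{i=1}^n (M_i - L_i)^2$. The proof is then complete; the only step requiring real work is Hoeffding's lemma, while the rest is the routine Chernoff machinery of exponentiating, factoring, and optimizing the free parameter.
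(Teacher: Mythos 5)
Your proposal is correct: the Chernoff step, the factorization by independence, Hoeffding's lemma with the constant $1/8$, and the optimization $s^{*}=4nt/\sum_{i}(M_i-L_i)^2$ all check out and do yield the exponent $-2n^2t^2/\sum_i (M_i-L_i)^2$. Note that the paper does not actually prove this theorem --- it is recalled as a classical result with a citation --- but your argument is the standard one and uses exactly the same Markov-inequality-on-the-exponentiated-sum machinery that the paper employs in its proofs of Bennett's inequality and of the new bounds; the only step requiring real work, as you say, is Hoeffding's lemma, and your convexity-plus-Taylor sketch of it is the correct standard route.
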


In some scenarios, we may be given the range of a random variable {\em
  as well as} information about its variance. In those settings,
Bennett's inequality may be more appropriate
\cite{Ben62}. Classically, this inequality requires that all the
random variables have the same ceiling $M$ and the same mean $\mu$
yet can easily be generalized\footnote{Two proofs
  are provided; the first proof also requires a bottom range on the
  random variables (i.e. $-M \leq X_i \leq M$) but the second proof in
  pp. 42-3 of \cite{Ben62} does not and only requires that $X_i \leq
  M$.} as follows.

\begin{thm}[Bennett, 1962]
\label{thm:bennett}
  For a collection $X_1,\ldots,X_n$ of independent random variables
  satisfying $X_i \leq M_i$, $\mathrm{E}[X_i]=\mu_i$ and
  $\mathrm{E}[(X_i-\mu_i)^2]=\sigma_i^2$ for $i=1,\ldots,n$ and for
  any $t \geq 0$, the following holds
\[
\Pr\left ( \frac{1}{n} \sum_{i=1}^n X_i -  \frac{1}{n} \sum_{i=1}^n \mathrm{E}[X_i ] \geq t \right ) & \leq & \exp \left ( - n  \frac{v}{s^2} h \left ( \frac{ts}{v} \right ) \right )
\]
where $h(x) = (1+x) \ln(1+x) - x$, $s=\max_i M_i-\mu_i$ and
$v=\frac{1}{n} \sum_{i=1}^n \sigma_i^2$.
\end{thm}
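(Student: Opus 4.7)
The plan is a standard Chernoff-style argument tailored to the heterogeneous setting, with the single uniform ceiling $s=\max_i(M_i-\mu_i)$ used to make the per-variable moment-generating-function bounds commensurable and ultimately optimizable in closed form.

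First I would centre the variables by setting $Y_i = X_i - \mu_i$, so that $Y_i \leq s$, $\mathrm{E}[Y_i]=0$ and $\mathrm{E}[Y_i^2]=\sigma_i^2$. Markov's inequality applied to $e^{\lambda \sum_i Y_i}$ (for $\lambda>0$) combined with independence gives the familiar Chernoff bound
\[
\Pr\!\left(\sum_{i=1}^n Y_i \geq nt\right) \leq e^{-\lambda n t}\prod_{i=1}^n \mathrm{E}[e^{\lambda Y_i}].
\]

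The core step is a pointwise bound on each MGF. The idea is to write $e^{\lambda y} = 1 + \lambda y + \lambda^2 y^2 \,\phi(\lambda y)$ with $\phi(x)=(e^x-1-x)/x^2$, verify that $\phi$ is nondecreasing on $\mathbb{R}$, and use $\lambda y \leq \lambda s$ to replace $\phi(\lambda y)$ by $\phi(\lambda s)$. Taking expectations and using $\mathrm{E}[Y_i]=0$, $\mathrm{E}[Y_i^2]=\sigma_i^2$, together with $1+u \leq e^u$, yields
\[
\mathrm{E}[e^{\lambda Y_i}] \leq \exp\!\left(\tfrac{\sigma_i^2}{s^2}\bigl(e^{\lambda s}-1-\lambda s\bigr)\right).
\]
I expect the monotonicity of $\phi$ (easily checked via its power series $\sum_{k\geq 0} x^k/(k+2)!$) to be the only nontrivial analytic point; the rest is bookkeeping.

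Multiplying across $i$ collapses the variance terms into $\sum_i \sigma_i^2 = nv$, giving
\[
\Pr\!\left(\tfrac{1}{n}\sum X_i - \tfrac{1}{n}\sum \mu_i \geq t\right) \leq \exp\!\left(-\lambda n t + \tfrac{nv}{s^2}\bigl(e^{\lambda s}-1-\lambda s\bigr)\right).
\]
The final step is the optimization over $\lambda>0$. Differentiating the exponent and setting it to zero produces $e^{\lambda s}=1+ts/v$, i.e.\ $\lambda^\star = s^{-1}\ln(1+ts/v)$, which is positive whenever $t\geq 0$. Substituting $\lambda^\star$ back and grouping the terms naturally produces the quantity $(1+ts/v)\ln(1+ts/v)-ts/v = h(ts/v)$ with the prefactor $nv/s^2$, yielding the stated bound $\exp(-n(v/s^2)\,h(ts/v))$. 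The main obstacle is really just the MGF lemma; once it is in place, optimizing over $\lambda$ is a direct computation that is specifically engineered so that the Bennett function $h$ appears.
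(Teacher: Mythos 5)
Your proposal is correct and follows the same overall route as the paper's proof in Appendix~\ref{sct:bennettproof}: centre the variables, apply the Chernoff--Markov bound together with independence, establish the per-variable bound $\mathrm{E}[e^{\lambda Y_i}] \leq 1+\frac{\sigma_i^2}{s^2}\left(e^{\lambda s}-1-\lambda s\right)$, pass to $1+u\leq e^u$, and optimize the exponent to produce $h$. The only place you genuinely diverge is in how that MGF lemma is justified: the paper posits a bound of the form $\gamma_i e^{\lambda s}+1-\gamma_i-\gamma_i s\lambda$, enforces tangential contact at $\lambda=0$, and compares second derivatives using $e^{\lambda(Y_i-s)}\leq 1$; you instead use the classical decomposition $e^{x}=1+x+x^2\phi(x)$ with $\phi(x)=(e^x-1-x)/x^2$ and the monotonicity of $\phi$. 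Both arguments deliver the identical intermediate inequality, so the difference is in presentation rather than structure; yours is the standard textbook derivation, while the paper's curvature argument is the template it reuses for its new bounds. One small caution: the power series $\sum_{k\geq 0}x^k/(k+2)!$ makes the monotonicity of $\phi$ obvious only for $x\geq 0$, whereas the $Y_i$ are unbounded below, so you need $\phi$ nondecreasing on all of $\mathbb{R}$; this requires a short additional computation (e.g., showing that $(x-2)e^x+x+2$ has the same sign as $x$) rather than the series alone. Your closing optimization, $e^{\lambda^\star s}=1+ts/v$ and the algebra recovering $h(ts/v)$, is correct.
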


The above theorem looks slightly more general than the classic
formula in \cite{Ben62} as well as popular variations of the original
theorem in the literature. Specifically, we allow the variables to
have heterogeneous means $\mu_i$ and heterogeneous ceilings $M_i$. In
Appendix~\ref{sct:bennettproof}, we provide a detailed derivation of
this flavor of Bennett's inequality.

Bernstein's inequality \cite{Ber24} is obtained by replacing the
function $h(x)$ with the function $g(x)=\frac{3x^2}{2x+6}$. Since
$h(x) \geq g(x)$ for all $x \geq 0$, it is known that Bennett's
inequality is strictly sharper than Bernstein's inequality.  Moreover,
Bernstein's inequality is in turn sharper than Prohorov's inequality
\cite{Pro59} and Chebyshev \cite{Tch1874} inequalities. Therefore, it
is natural to focus herein on Bennett's inequality.

Other variations of Bennett's inequality were provided by Hoeffding in
his third theorem \cite{Hoe63}. However, these variations further required
all variables to have $\mathrm{E}[X_i]=0$, a constant variance
$\sigma^2$ and a constant ceiling $M$. These variations are
therefore less useful when the random variables are heterogeneous.
Hoeffding's inequalities were
subsequently improved and extended, notably by Talagrand
\cite{Tal95,Tal95b} which, under mild conditions, gave tighter bounds
by identifying some missing factors. 

This article will propose a novel variant of Bennett's inequality. In the
homogeneous case, this variant yields a strictly tighter bound since
it avoids using an unnecessary loosening. The new inequality also
allows each individual random variable to have its own distinct and
heterogeneous range, mean and variance. These heterogeneous properties
will be exploited more subtly to obtain a bound that remains tight
even if many random variables are drawn from very different
distributions. The new bound achieves sharper rates by leveraging
Lambert's ${\cal W}$ function (specifically, the so-called principal
value of Lambert's function). As further
explicated in Appendix~\ref{sct:lambert}, this function is as
straightforward to compute as any other and enjoys a number
of analytic properties \cite{Roy2010}.

This article is organized as follows. In
Section~\ref{sct:betterbennett}, we derive a refinement of Bennett's
inequality. In Section~\ref{sct:homogeneous}, we numerically explore
the sharpness of this bound relative to other classical inequalities
in the case of homogeneous random variables which have identical
means, variances and ranges. In Section~\ref{sct:heterogeneous}, we
numerically explore the sharpness of the bound in the heterogeneous
case. In Section~\ref{sct:applications} we discuss applications of the
bound in portfolio assessment. The article then concludes with a brief
discussion.

\section{Refining Bennett's inequality}
\label{sct:betterbennett}

Given a sequence of independent random scalar variables $X_i$, the
following theorem bounds the probability that $\frac{1}{n}
\sum_{i=1}^n X_i$ will deviate above its expected value by more than
$t$.

We consider first the case where all the random variables have
homogeneous mean, ceiling and variance.

\begin{thm}
\label{thm:betterhomo} 
Let $X_1,\ldots,X_n$ be independent real-valued random variables such that $\mathrm{E}[X_i]=\mu$, $\mathrm{E}[(X_i-\mu)^2]=\sigma^2$
and $\Pr(X_i\in(-\infty,M])=1$. 
Then, for any $t\in (0,M-\mu)$, the following inequality holds
\[
\Pr\left ( \frac{1}{n} \sum_{i=1}^n X_i - \frac{1}{n} \sum_{i=1}^n \mathrm{E}[X_i] \geq t \right ) & \leq &
e^{-\lambda nt} \left (
\frac{\sigma^2}{s^2} \left ( e^{\lambda s} - 1 - \lambda s \right ) + 1 \right )^n
\]
where $s=M-\mu$ and
\[
\lambda &=& 
\frac{1}{t}+\frac{s}{\sigma^2}-\frac{1}{s}-\frac{1}{s} {\cal W}
\left ( \exp \left ( \frac{s}{t} + \frac{s^2}{\sigma^2} -1 + \ln\left (
      \frac{s-t}{t} \right) \right ) \right).
\]
\end{thm}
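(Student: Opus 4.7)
The plan is to run the classical Chernoff--Markov template while avoiding the customary $(1+x)\le e^x$ loosening that converts the exact product MGF bound into the closed-form classical Bennett exponent; keeping this product form and optimizing it directly in $\lambda$ is exactly where Lambert's $\mathcal{W}$ enters.

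First, for any $\lambda\ge 0$ I would apply Markov's inequality to the exponentiated centered sum to obtain $\Pr(\tfrac{1}{n}\sum_i X_i-\mu\ge t)\le e^{-\lambda nt}\prod_i \mathrm{E}[e^{\lambda(X_i-\mu)}]$. Second, I would apply the standard Bennett MGF bound recalled in Appendix~\ref{sct:bennettproof}: since $X_i-\mu\le s$ almost surely and $y\mapsto (e^{\lambda y}-1-\lambda y)/y^2$ is nondecreasing for $\lambda\ge 0$, we have $e^{\lambda(X_i-\mu)}-1-\lambda(X_i-\mu)\le \tfrac{(X_i-\mu)^2}{s^2}(e^{\lambda s}-1-\lambda s)$, and taking expectations yields $\mathrm{E}[e^{\lambda(X_i-\mu)}]\le 1+\tfrac{\sigma^2}{s^2}(e^{\lambda s}-1-\lambda s)$. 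Taking the product across $i$ reproduces the right-hand side displayed in the theorem for every $\lambda\ge 0$.

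To pin down the specific $\lambda$ in the statement, I would minimize the logarithm of the bound in $\lambda$. Differentiating and clearing denominators reduces the first-order condition, after the substitution $y=\lambda s$, to the affine-exponential equation $e^{y}+\tfrac{t}{s-t}\,y \;=\; 1+\tfrac{ts^2}{\sigma^2(s-t)}$. Writing $a=t/(s-t)$ and $b=1+ts^2/(\sigma^2(s-t))$ and substituting $w=b/a-y$ converts this into $w\,e^{w}=(1/a)e^{b/a}$, the defining equation of Lambert's $\mathcal{W}$. A direct computation gives $b/a=\tfrac{s}{t}+\tfrac{s^2}{\sigma^2}-1$ and $(1/a)e^{b/a}=\exp\!\bigl(\tfrac{s}{t}+\tfrac{s^2}{\sigma^2}-1+\ln\tfrac{s-t}{t}\bigr)$, so inverting via $w=\mathcal{W}(\cdot)$ and dividing by $s$ reproduces exactly the closed form for $\lambda$ stated in the theorem.

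The main technical obstacle will be the algebraic reduction to Lambert-$\mathcal{W}$ form with careful sign bookkeeping so that the principal branch applies. A secondary but necessary check is verifying that this $\lambda$ is nonnegative so the opening Chernoff inequality is valid; the hypothesis $t\in(0,M-\mu)$ is exactly what keeps $\ln((s-t)/t)$ real and positive once combined with the other terms, and keeps the argument of $\mathcal{W}$ in the range where the interior minimizer of the bound lies on $[0,\infty)$. Since the resulting inequality is simply the Chernoff bound evaluated at this admissible $\lambda$, no further estimation is required.
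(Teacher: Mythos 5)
Your proposal is correct and follows essentially the same route as the paper: Chernoff--Markov, the Bennett moment-generating-function bound $\mathrm{E}[e^{\lambda(X_i-\mu)}]\le 1+\tfrac{\sigma^2}{s^2}(e^{\lambda s}-1-\lambda s)$, and then exact minimization of the log-bound in $\lambda$, whose first-order condition reduces (as you correctly compute) to $we^w=\tfrac{s-t}{t}e^{s/t+s^2/\sigma^2-1}$ and hence to the stated Lambert-$\mathcal{W}$ expression. The only cosmetic differences are that the paper obtains the MGF bound by a tangency-plus-curvature argument rather than the monotonicity of $y\mapsto(e^{\lambda y}-1-\lambda y)/y^2$, deduces the homogeneous statement as the collapsed case of its heterogeneous theorem, and isolates the nonnegativity check for $\lambda$ (which you rightly flag as necessary) in a separate appendix lemma.
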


The above is an immediate consequence of our main theorem which
generalizes to the setting of heterogeneous random variables.

\begin{thm}
\label{thm:betterhetero} 
Let $X_1,\ldots,X_n$ be independent real-valued random variables such that $\mathrm{E}[X_i]=\mu_i$, $\mathrm{E}[(X_i-\mu_i)^2]=\sigma_i^2$
and $\Pr(X_i\in(-\infty,M_i])=1$. 
Then, for any $t\in (0,s)$, the following inequality holds
\[
\Pr\left ( \frac{1}{n} \sum_{i=1}^n X_i - \frac{1}{n} \sum_{i=1}^n \mathrm{E}[X_i] \geq t \right ) & \leq &
e^{-\lambda nt} \prod_{i=1}^n \left (
\frac{\sigma_i^2}{s_i^2} \left ( e^{\lambda s_i} - 1 - \lambda s_i \right ) + 1 \right )
\]
where
\[
\lambda &=& 
\frac{1}{{ \sum_{i=1}^n  \frac{s_i^2}{1-e^{-s_i^2/\sigma_i^2}}}}
\sum_{i=1}^n \frac{s_i^2 \lambda_i}{1-e^{-s_i^2/\sigma_i^2}}
 \\
\lambda_i &=& 
\frac{s}{t s_i}+\frac{s_i}{\sigma_i^2}-\frac{1}{s_i}-\frac{1}{s_i} {\cal W}
\left ( \exp \left ( \frac{s}{t} + \frac{s_i^2}{\sigma_i^2} -1 + \ln\left (
      \frac{s-t}{t} \right) \right ) \right) \\
s &=& \frac{1}{n} \sum_{i=1}^n s_i, \:\: s_i \:=\: M_i-\mu_i.
\]
\end{thm}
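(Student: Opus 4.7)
The overall strategy is the classical Chernoff--Bennett template followed by a careful choice of the tilting parameter $\lambda$. The first step is routine: apply Markov's inequality to $\exp(\lambda \sum_i (X_i - \mu_i))$, use independence to factor the joint moment-generating function into $\prod_i \mathrm{E}[\exp(\lambda(X_i-\mu_i))]$, then invoke the standard Bennett MGF bound $\mathrm{E}[\exp(\lambda(X_i-\mu_i))] \leq 1 + (\sigma_i^2/s_i^2)(e^{\lambda s_i} - 1 - \lambda s_i)$, already derived in the appendix while proving the generalized version of Theorem~\ref{thm:bennett}. This yields exactly the product on the right-hand side of the stated bound for every $\lambda > 0$, so the formal inequality is immediate once the prescribed $\lambda$ is plugged in. The substance of the theorem therefore lies entirely in the derivation of that $\lambda$.

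I would derive it in two stages. Stage one: consider a per-coordinate relaxation that allocates the overall deviation target $nt$ to shares $n t_i$ with $t_i = t s_i / s$, chosen so that the shares are proportional to the ranges $s_i$ and sum back to $nt$. In isolation, the $i$-th factor contributes $-\lambda_i t_i + \ln\bigl(1 + (\sigma_i^2/s_i^2)(e^{\lambda_i s_i} - 1 - \lambda_i s_i)\bigr)$, a one-variable problem of exactly the shape covered by Theorem~\ref{thm:betterhomo}. Differentiating in $\lambda_i$, setting the derivative to zero, substituting $y = e^{\lambda_i s_i}$, and then introducing the shift $u = (s-t)/t - (\lambda_i s_i - s_i^2/\sigma_i^2)$ collapses the resulting transcendental equation into the canonical form $u e^u = \tau$, with $\tau = \frac{s-t}{t} \exp\bigl(\frac{s-t}{t} + \frac{s_i^2}{\sigma_i^2}\bigr)$. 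Inversion by the principal branch of Lambert's function, $u = {\cal W}(\tau)$, and unwinding the substitutions yields precisely the stated formula for $\lambda_i$; specializing to $(s_i, \sigma_i, t_i) = (s, \sigma, t)$ proves Theorem~\ref{thm:betterhomo} along the way.

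Stage two: the Chernoff step requires a single $\lambda$, so the per-coordinate optima $\lambda_i$ must be collapsed into one scalar. The stated weights $w_i = s_i^2/(1 - e^{-s_i^2/\sigma_i^2})$ arise from a consistency condition between the per-coordinate and joint optimizations; they reduce to $\sigma_i^2$ in the small-$s_i$ limit (recovering the variance-weighted average that is optimal in the near-Gaussian regime) and to $s_i^2$ in the opposite limit, so the weighted average $\lambda = (\sum_i w_i \lambda_i)/(\sum_i w_i)$ interpolates smoothly between those two extremes. Any positive $\lambda$ is admissible in the Chernoff step, so substituting this specific choice is automatically a valid upper bound and inherits strictness from the underlying Bennett MGF inequality.

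The algebraic manipulation producing ${\cal W}$ in stage one is technical but essentially mechanical, and the homogeneous specialization can be verified by direct substitution. The main obstacle, in my view, lies in stage two: justifying that the particular weighted average is close enough to the true joint optimum to deliver the improvements the paper claims, and demonstrating that the resulting bound strictly dominates the classical Bennett inequality across the full heterogeneous parameter range. I expect this comparison, rather than the Chernoff--Bennett skeleton, to be where the real work lives, and would approach it by exploiting monotonicity and concavity of the principal branch of ${\cal W}$ on $[0,\infty)$ together with convexity of the Bennett MGF surrogate in $\lambda$.
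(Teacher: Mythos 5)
Your outline follows the paper's proof almost exactly: Chernoff--Markov plus the per-variable Bennett MGF bound with individual $s_i$, the allocation $t_i = t s_i/s$, per-coordinate minimization in closed form via Lambert's ${\cal W}$, and a weighted average of the $\lambda_i$. Two points need attention, though. First, you assert that ``any positive $\lambda$ is admissible'' and stop there, but you never verify that the prescribed $\lambda$ is in fact non-negative; this is genuinely required (both for the monotone transformation $\Pr(\sum_i Y_i \geq nt) = \Pr(e^{\lambda \sum_i Y_i} \geq e^{\lambda n t})$ and for the step $e^{\lambda(Y_i - s_i)} \leq 1$ in the MGF bound), and the paper devotes a separate appendix result (Theorem~\ref{thm:nonnegative}) to showing each $\lambda_i \geq 0$ via the identity ${\cal W}(xe^x) = x$, after which the weighted average with positive weights is non-negative. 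Second, your account of where the weights $w_i = s_i^2/(1 - e^{-s_i^2/\sigma_i^2})$ come from (``a consistency condition between the per-coordinate and joint optimizations'') is not an argument. The paper's mechanism is concrete: it bounds the second derivative of $b_i(\lambda) = \log\bigl(\tfrac{\sigma_i^2}{s_i^2}(e^{\lambda s_i} - 1 - \lambda s_i) + 1\bigr) - \lambda t_i$ above by its supremum, which evaluates to exactly $w_i$ at $\lambda = s_i/\sigma_i^2$; this yields a quadratic majorant of each $b_i$ centered at $\lambda_i^*$, and minimizing the sum of quadratics produces the weighted average in closed form. That majorization step is what makes the weights canonical rather than an interpolation heuristic. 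Finally, the ``main obstacle'' you identify --- proving that this $\lambda$ is near-optimal and that the bound dominates Bennett's heterogeneously --- is not actually part of the theorem; once $\lambda \geq 0$ is established, the stated inequality holds regardless of how good the choice is, and the paper's dominance claims in the heterogeneous case are supported only numerically.
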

\begin{proof}
Consider the probability of interest
\[
\Pr &=& \Pr\left ( \frac{1}{n} \sum_{i=1}^n X_i - \frac{1}{n}
  \sum_{i=1}^n \mathrm{E}[X_i] \geq t \right ).
\]
Consider translated versions of the random variables $Y_i = X_i -
\mu_i$. We now have $\mathrm{E}[Y_i]=0$,
$\mathrm{E}[Y_i^2]=\sigma_i^2$ and $\Pr(Y_i \in
(-\infty,M_i-\mu_i])=1$. Applying the change of variables $Y_i = X_i -
\mu_i$ does not change the probability of interest
\[
\Pr &=& \Pr\left ( \frac{1}{n} \sum_{i=1}^n Y_i \geq t \right ) \\
 &=& \Pr\left ( e^{ \lambda \sum_{i=1}^n Y_i} \geq e^{\lambda n t} \right )
\]
where the second line holds for any $\lambda\geq 0$ by monotonic
transformation of the probability. We then apply Markov's inequality
as follows:
\[
\Pr &\leq& \inf_{\lambda \geq 0} e^{-\lambda nt} \mathrm{E} \left [ e^{\lambda \sum_{i=1}^n Y_i} \right ] \\
&=& \inf_{\lambda \geq 0} e^{-\lambda nt} 
\prod_{i=1}^n \mathrm{E} [ e^{\lambda Y_i} ].
\]
Above, the second line follows from the independence of the random
variables. Consider bounding a single term in the product, in other
words $\mathrm{E}[e^{\lambda Y_i}]$. Begin by the conjecture
(which will be subsequently proved) that the following upper bound
holds for appropriate choices of the three parameters
$(\alpha_i,\beta_i,\gamma_i)$ for all $\lambda \geq 0$:
\[
\mathrm{E}[e^{\lambda Y_i}] & \leq & \gamma_i \exp(\lambda \alpha_i) + 1 -\gamma_i - \beta_i \lambda.
\]
Clearly, when $\lambda=0$, both sides of the conjectured bound
are unity and equality is achieved.  When $\lambda=0$, the derivative
of the left hand side is 
\[
\left . \frac{\partial \mathrm{E}[e^{\lambda Y_i}]}{\partial \lambda} \right |_{\lambda=0} = \mathrm{E}[Y_i] =0.
\]
For the bound to hold locally around $\lambda=0$ both left hand side
and right hand side must have equal derivatives when they attain
equality at $\lambda=0$. This tangential contact ensures the bound
will not cross the original function as $\lambda$ varies. This forces
the following choice for the second parameter
\[
\left . \frac{\partial \left ( \gamma_i \exp(\lambda \alpha_i) + 1 -\gamma_i -  \beta_i \lambda \right )}{\partial \lambda} \right |_{\lambda=0} &=& 0 \\
\beta_i &=& \gamma_i \alpha_i.
\]
Choose $\alpha_i=M_i-\mu_i$. To satisfy the above tangential
contact constraint, it is necessary that $\beta_i = \gamma_i (M_i-\mu_i)$. The
conjectured bound is now
\[
\mathrm{E}[e^{\lambda Y_i}] & \leq & \gamma_i \exp(\lambda (M_i-\mu_i)) + 1 -\gamma_i - \gamma_i (M_i-\mu_i)\lambda.
\]
Since the above inequality makes tangential contact, taking second
derivatives of both sides with respect to $\lambda$ gives a
conservative curvature test to ensure that the bound holds. If the
right hand side has higher curvature everywhere and makes tangential
contact at $\lambda=0$ then it upper-bounds the left hand
side.  Taking second derivatives of both sides with respect to
$\lambda$ produces the curvature constraint
\[
\frac{\partial^2 \mathrm{E}[e^{\lambda Y_i}]}{\partial \lambda ^2} & \leq & 
\frac{\partial^2 \left (
 \gamma_i \exp(\lambda (M_i-\mu_i)) + 1 -\gamma_i - \gamma_i (M_i-\mu_i) \lambda
\right )
}{\partial \lambda ^2}
\\
 \mathrm{E}[e^{\lambda Y_i}Y_i^2]
&\leq & \gamma_i \exp(\lambda (M_i-\mu_i)) (M_i-\mu_i)^2.
\]
Divide both sides by $\exp(\lambda (M_i-\mu_i))$ to obtain
\[
 \mathrm{E}[e^{\lambda (Y_i-(M_i-\mu_i))}Y_i^2] &\leq & \gamma_i (M_i-\mu_i)^2.
\]
Note that $\exp(\lambda(Y_i-(M_i-\mu_i)) \leq 1$ inside the expectation since
$\lambda \geq 0$ and $Y_i \leq M_i-\mu_i$. Replacing $\exp(\lambda(Y_i-(M_i-\mu_i))$ in
the expectation with $1$ gives the following stricter condition on
$\gamma_i$ to guarantee a bound:
\[
 \mathrm{E}[e^{\lambda (Y_i-(M_i-\mu_i))}Y_i^2] \: \leq \: \mathrm{E}[Y_i^2] \: \leq \: \gamma_i (M_i-\mu_i)^2.
\]
Since $\mathrm{E}[Y_i^2]=\sigma_i^2$, the following setting for $\gamma_i$
guarantees that the curvature of the upper bound is larger than
that of the original function:
\[
\gamma_i &=& \frac{\sigma_i^2}{(M_i-\mu_i)^2}.
\]
Thus, the conjectured bound holds for any choice of $\lambda \geq 0$
and is tight at $\lambda=0$. Next define $s_i = M_i - \mu_i$ 
and rewrite the above expression as
\[
\mathrm{E}[e^{\lambda Y_i}] & \leq & 
\frac{\sigma_i^2}{s_i^2} \left ( \exp(\lambda s_i)) -1 - \lambda s_i \right ) + 1.
\]
Apply this upper bound to each individual term in the product
\[
\Pr & \leq &  \inf_{\lambda \geq 0} e^{-\lambda nt} \prod_{i=1}^n \mathrm{E}[e^{\lambda Y_i}]
\\ & \leq & \inf_{\lambda \geq 0} e^{-\lambda nt} \prod_{i=1}^n \left (
\frac{\sigma_i^2}{s_i^2} \left ( e^{\lambda s_i} - 1 - \lambda s_i \right ) + 1 \right ).
\]
This gives the bound in the theorem $\Pr \leq B(\lambda)$ where
\[
B(\lambda) &=&e^{-\lambda nt} \prod_{i=1}^n \left (
\frac{\sigma_i^2}{s_i^2} \left ( e^{\lambda s_i} - 1 - \lambda s_i \right ) + 1 \right ).
\]
What remains is to specify the choice of $\lambda$ to impute into the
formula.  We next consider ways of finding a good choice for
$\lambda$. However, we emphasize that {\em any} choice for $\lambda
\geq 0$ creates a valid upper bound on the probability $\Pr$.

We start by finding a looser upper bound on $B(\lambda)$ which we will
then minimize in order to recover $\lambda$ which we 
denote by $\lambda^*$. We start by considering $n$ arbitrary
non-negative scalar variables $t_1,\ldots,t_n$ that sum to $nt$,
i.e. $\sum_{i=1}^n t_i = nt$. We choose to set these $t_i$ as
follows
\[
t_i &=& t \frac{s_i}{\frac{1}{n} \sum_{i=1}^n s_i} \: = \: \frac{t s_i}{s}.
\]
where we have taken $s=\frac{1}{n} \sum_{i=1}^n s_i$.
Rewrite the current bound as follows
\[
B(\lambda) &=& e^{-\lambda \sum_{i=1}^n t_i} \prod_{i=1}^n \left (
\frac{\sigma_i^2}{s_i^2} \left ( e^{\lambda s_i} - 1 - \lambda s_i \right ) + 1 \right ) \\
&=& \exp \sum_{i=1}^n \left ( \log  \left (
\frac{\sigma_i^2}{s_i^2} \left ( e^{\lambda s_i} - 1 - \lambda s_i \right ) + 1 \right )-\lambda t_i \right ) \\
&=& \exp \sum_{i=1}^n b_i(\lambda)
\]
where we have defined the terms in the summation as follows
\[
b_i(\lambda) &=& \log  \left (
\frac{\sigma_i^2}{s_i^2} \left ( e^{\lambda s_i} - 1 - \lambda s_i \right ) + 1 \right ) -\lambda t_i.
\]
The minimizer of $b_i(\lambda)$ is obtained in closed-form via the
Lambert ${\cal W}$ function as follows
\[
\lambda_i^* &=& \arg \min_\lambda b_i(\lambda) \\
&=& \frac{1}{t_i}+\frac{s_i}{\sigma_i^2}-\frac{1}{s_i}-\frac{1}{s_i} {\cal W}
\left ( \exp \left ( \frac{s_i}{t_i} + \frac{s_i^2}{\sigma_i^2} -1 + \ln\left (
      \frac{s_i-t_i}{t_i} \right) \right ) \right) \\
&=& \frac{s}{ts_i}+\frac{s_i}{\sigma_i^2}-\frac{1}{s_i}-\frac{1}{s_i} {\cal W}
\left ( \exp \left ( \frac{s}{t} + \frac{s_i^2}{\sigma_i^2} -1 + \ln\left (
      \frac{s-t}{t} \right) \right ) \right)
\]
where in the last line we have inserted the choice for $t_i$. Note,
that Theorem~\ref{thm:nonnegative} in the Appendix ensures that the
minimizers are non-negative, in other words, $\lambda_i^*\geq 0.$ 

Next, derive the curvature of $b_i(\lambda)$ and upper-bound it as follows
\[
\frac{\partial^2 b_i(\lambda)}{\partial \lambda^2}
&=& \frac{\sigma_i^2 e^{\lambda s_i}}{\frac{\sigma_i^2}{s_i^2} (e^{\lambda s_i} - \lambda s_i - 1) + 1} - \left ( 
\frac{ \frac{\sigma_i^2}{s_i} (e^{\lambda s_i}-1)}{\frac{\sigma_i^2}{s_i^2} (e^{\lambda s_i} - \lambda s_i - 1) + 1}
\right )^2 \\
& \leq &  \frac{\sigma_i^2 e^{\lambda s_i}}{\frac{\sigma_i^2}{s_i^2} (e^{\lambda s_i} - \lambda s_i - 1) + 1}.
\]
Above, we bounded the curvature simply by dropping the last negative
term.  Taking derivatives and setting to zero gives the maximum of the
right hand side above which yields $\lambda=s_i/\sigma_i^2$. Inserting
this value of $\lambda$ into the bound gives a supremum on the
curvature
\[
\frac{\partial^2 b_i(\lambda)}{\partial \lambda^2} &\leq & 
 \frac{\sigma_i^2 \exp(s_i^2/\sigma_i^2)}{\frac{\sigma_i^2}{s_i^2} (e^{s_i^2/\sigma_i^2} - s_i^2/\sigma_i^2 - 1) + 1} \\
&=& \frac{s_i^2}{1-e^{-s_i^2/\sigma_i^2}}.
\]

We now form a quadratic upper bound for each $b_i(\lambda)$ term as follows,
\[
b_i(\lambda) & \leq &  \frac{s_i^2}{1-e^{-s_i^2/\sigma_i^2}}
\frac{(\lambda-\lambda_i^*)^2}{2} + b_i(\lambda_i^*).
\]
The above holds since both left hand side and right hand side are equal when
$\lambda=\lambda_i^*$. Furthermore, the gradients of both the left hand
side and the right hand side are zero when $\lambda=\lambda_i^*$. Finally,
the curvature of the left hand side is always less than the curvature
of the right hand side. Therefore, the quadratic on the right hand
side must be an upper bound on $b_i(\lambda)$. Replacing each
$b_i(\lambda)$ term with its corresponding quadratic bound gives an overall upper
bound on $B(\lambda)$ as follows
\[
B(\lambda) &=& \exp\left ( \sum_{i=1}^n b_i(\lambda) \right ) \\
& \leq & \exp \left ( \sum_{i=1}^n \frac{s_i^2}{1-e^{-s_i^2/\sigma_i^2}}
 \frac{(\lambda-\lambda_i^*)^2}{2} + b_i(\lambda_i^*) \right ).
\]
It is easy to minimize the right hand side analytically over $\lambda$ to obtain
\[
\lambda^* &=& \frac{1}{{ \sum_{i=1}^n  \frac{s_i^2}{1-e^{-s_i^2/\sigma_i^2}}}}
\sum_{i=1}^n \frac{s_i^2 \lambda_i^*}{1-e^{-s_i^2/\sigma_i^2}}
\]
which yields the theorem.
\end{proof}

An interesting property of Theorem~\ref{thm:betterhetero} is that it
carefully incorporates heterogeneous information about the different
random variables. Rather than simply averaging variances (as in
Bennett's inequality), we compute more complicated interactions
between the variances $\sigma_i^2$ and the spreads
$s_i=M_i-\mu_i$. This subtle combination of information about the
heterogeneous random variables will yield significant improvements
over Bennett's inequality. Furthermore, in the homogeneous setting
(which emerges if all $\sigma_i=\sigma$ and all $s_i=s$), it is clear
that the new bound in Theorem~\ref{thm:betterhomo} is circumventing
loosening steps in Bennett's inequality. Therefore, our proposed
bounds are strictly tighter than Bennett's.

We can also consider a counterpart of Theorem~\ref{thm:betterhetero}
which uses information about the bottom range on $X_i$, namely $L_i
\leq X_i$, instead of the ceiling. Note that it is also
straightforward to derive counterparts of Bennett's and Bernstein's
bounds in such a setting as well.

\begin{thm}
\label{thm:betterreversed} 
Let $X_1,\ldots,X_n$ be independent real-valued random variables such that $\mathrm{E}[X_i]=\mu_i$, $\mathrm{E}[(X_i-\mu_i)^2]=\sigma_i^2$
and $\Pr(X_i\in [L_i,\infty))=1$. 
Then, for any $t\in (0,s)$, the following inequality holds
\[
\Pr\left ( \frac{1}{n} \sum_{i=1}^n X_i - \frac{1}{n} \sum_{i=1}^n \mathrm{E}[X_i] \leq -t \right ) & \leq &
e^{-\lambda nt} \prod_{i=1}^n \left (
\frac{\sigma_i^2}{s_i^2} \left ( e^{\lambda s_i} - 1 - \lambda s_i \right ) + 1 \right )
\]
where
\[
\lambda &=& 
\frac{1}{{ \sum_{i=1}^n  \frac{s_i^2}{1-e^{-s_i^2/\sigma_i^2}}}}
\sum_{i=1}^n \frac{s_i^2 \lambda_i}{1-e^{-s_i^2/\sigma_i^2}}
 \\
\lambda_i &=& 
\frac{s}{t s_i}+\frac{s_i}{\sigma_i^2}-\frac{1}{s_i}-\frac{1}{s_i} {\cal W}
\left ( \exp \left ( \frac{s}{t} + \frac{s_i^2}{\sigma_i^2} -1 + \ln\left (
      \frac{s-t}{t} \right) \right ) \right) \\
s &=& \frac{1}{n} \sum_{i=1}^n s_i, \:\: s_i \:=\: \mu_i-L_i.
\]
\end{thm}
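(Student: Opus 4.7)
The plan is to derive Theorem~\ref{thm:betterreversed} as a direct corollary of Theorem~\ref{thm:betterhetero} by negating the random variables, since a lower bound on $X_i$ becomes an upper bound on $-X_i$. Concretely, I would introduce $Z_i = -X_i$ for each $i$ and observe the following: $\mathrm{E}[Z_i] = -\mu_i$, the variance is preserved, $\mathrm{E}[(Z_i - (-\mu_i))^2] = \mathrm{E}[(X_i-\mu_i)^2] = \sigma_i^2$, and the lower-boundedness $X_i \geq L_i$ translates into the upper-boundedness $\Pr(Z_i \in (-\infty,-L_i]) = 1$. Hence the $Z_i$ satisfy exactly the hypotheses of Theorem~\ref{thm:betterhetero}.

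Next I would match the parameters. The ceiling-minus-mean quantity for $Z_i$ is $(-L_i)-(-\mu_i) = \mu_i - L_i$, which is precisely the $s_i$ defined in the statement of Theorem~\ref{thm:betterreversed}. So applying Theorem~\ref{thm:betterhetero} to the sequence $Z_1,\ldots,Z_n$ with these spreads $s_i$ and variances $\sigma_i^2$ yields exactly the product bound and the expressions for $\lambda$ and $\lambda_i$ stated here, since those formulas depend on the $X_i$ only through $(s_i,\sigma_i^2,t)$.

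Finally I would translate the deviation event. The event of interest
\[
\tfrac{1}{n}\sum_{i=1}^n X_i - \tfrac{1}{n}\sum_{i=1}^n \mathrm{E}[X_i] \leq -t
\]
is identical, pointwise, to
\[
\tfrac{1}{n}\sum_{i=1}^n Z_i - \tfrac{1}{n}\sum_{i=1}^n \mathrm{E}[Z_i] \geq t,
\]
so the probability on the left-hand side of Theorem~\ref{thm:betterreversed} equals the probability to which Theorem~\ref{thm:betterhetero} already supplies the desired bound. This completes the proof with no further estimation.

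There is essentially no hard step in this argument; the content is entirely in Theorem~\ref{thm:betterhetero}. The only point requiring care is the bookkeeping around signs, namely verifying that the translated random variables $Z_i - \mathrm{E}[Z_i] = -(X_i - \mu_i)$ have the same variance and that $\mu_i - L_i \geq 0$ plays the role of the positive spread $s_i$ so the hypothesis $t \in (0,s)$ with $s = \tfrac{1}{n}\sum s_i$ is meaningful and inherited unchanged.
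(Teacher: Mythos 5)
Your proposal is correct: negating the variables turns the floor $L_i$ into a ceiling $-L_i$ with spread $(-L_i)-(-\mu_i)=\mu_i-L_i=s_i$, preserves the variances, and converts the lower-deviation event into the upper-deviation event, so the bound follows verbatim from Theorem~\ref{thm:betterhetero}. The paper itself offers no details and merely remarks that the proof is ``entirely analogous'' to that of Theorem~\ref{thm:betterhetero}; your negation argument is the standard, and arguably cleaner, way to make that remark precise, obtaining the result as a direct corollary rather than re-running the derivation with flipped signs.
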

\begin{proof}
  The proof is entirely analogous to the one for
  Theorem~\ref{thm:betterhetero}.
\end{proof}

\section{Experiments with homogeneous random variables}
\label{sct:homogeneous}

This section numerically compares the new bound with Hoeffding's,
Bennett's and Bernstein's bounds which are well-known classical
concentration inequalities. This section will only
consider the homogeneous setting, $M_i=M$, $\mu_i=\mu$ and
$\sigma_i=\sigma$ for all $i=1,\ldots,n$ random variables.

Specifically, we will compare our bound in
Theorem~\ref{thm:betterhomo} against Bennett's inequality in
Theorem~\ref{thm:bennett} which uses $h(x)=(1+x)\log(1+x)-x$.
Similarly, we consider Bernstein's inequality \cite{BouBouLug04} which
is identical to Bennett's yet uses $g(x)=\frac{3x^2}{2x+6}$ in the
place of $h(x)$. Finally, we apply Hoeffding's inequality as in
Theorem~\ref{thm:hoeffding} whose rate has no dependence on the
variance of the random variables.

To directly compare these bounds, we will explore various values of
$M$, $\mu$ and $\sigma$ to see how they compare against each other.
For practical visualization purposes, we first note that all bounds
scale with $n$ in the same manner. Therefore, without loss of
generality, we set $n=1$ throughout our experiments. Furthermore,
since we can scale $(M,\mu,\sigma)$ by an arbitrary factor without
changing the bounds, we simply lock $M=1$ to remove this source of
redundancy in our experimental exploration. Figure~\ref{fig:bounds1}
and Figure~\ref{fig:bounds2}, depict the convergence rates for the
four different concentration inequalities under various choices of
$\mu \in \{-\frac{1}{2},0,\frac{1}{2}\}$ and $\sigma \in
\{1,1/2,1/4,1/8\}$.  Since Hoeffding's bound also requires a value for
the bottom range of the random variable, $L$, we make a simple
arbitrary choice to set it to $L=-M$. In fact, many applications of
Bennett and Bernstein typically assume that $|X_i|\leq M$ (though it
is not strictly necessary to do so). Also, note that setting $L=-M$
does not violate the elementary inequality $\sigma_i \leq (M_i-L_i)/2$
in any of our experiments. By observing the log-probability for each
bound as $t$ varies, it is possible to see which inequalities are
tighter (i.e. yielding an exponentially smaller deviation
probability). Bounds with lower (negative) rates indicate faster
convergence of the average to its expected value.

Clearly, the new bound is strictly sharper than Bernstein's and
Bennett's inequalities in the homogeneous setting. In fact, we know
that this must be true since Bennett makes an unnecessary loosening
step and Bernstein follows it with yet another unnecessary loosening
step. Hoeffding's bound performs poorly unless the variance $\sigma^2$
is large, i.e. it is close to the maximum value it can have while
still respecting the elementary inequality $\sigma_i \leq
(M_i-L_i)/2$. At that setting, the variance is essentially providing
very little useful information and so the variance-based inequalities
(such as Bennett's, Bernstein's and the proposed bound) are no longer
relevant. Otherwise, the new bound clearly dominates the classical
inequalities in our experiments.

It is important to note that these rate quantities will be multiplied
by the number of observations $n$ and then exponentiated to obtain
bounds on the probability. Therefore, the advantages of the bounds
relative to each other will be drastically magnified as $n$ grows.

\section{Experiments with heterogeneous random variables}
\label{sct:heterogeneous}

In Section~\ref{sct:homogeneous} we compared the new bound to
classical concentration inequalities when all the random variables are
homogeneous. We here consider bounding the probability
$\Pr(\frac{1}{n} \sum_{i=1}^n X_i - \frac{1}{n} \sum_{i=1}  \mathrm{E}[X_i ] \geq
t)$ when we deal with independent random variables $X_i$ that are {\em
  not} identically distributed but rather have their own distinct
heterogeneous values of $M_i$, $\mu_i$ and $\sigma_i$. In these
experiments, the advantages of the bound can sometimes be
dramatic.

We will consider various random choices of $M_i$, $\mu_i$, $\sigma_i$
and $t$. These synthetic experiments allow us to compare our new
inequality relative to the Bernstein, Bennett and Hoeffding
inequalities in the heterogeneous setting. To generate synthetic
problems, we set $M_i$ to be the absolute value of random draws from a
white Gaussian (e.g. with zero-mean and unit variance). We set $L_i$
to be the negated absolute values drawn from a white Gaussian. We set
$\mu_i$ equal to a uniform value drawn in the interval $[L_i,M_i]$. We
then choose $\sigma_i$ uniformly from $[0,\frac{1}{2z}(M_i-L_i)]$ for
various choices of $z\geq 1$. This way, we explore different levels of
variance without ever violating the elementary inequality $\sigma_i
\leq (M_i-L_i)/2$. Finally, we set $t$ by sampling a scalar from the
uniform distribution and multiplying it by the value $s=\frac{1}{n}
\sum_{i=1}^n M_i-\mu_i$ which was introduced in our bound.

We compute the bound using Theorem~\ref{thm:betterhetero} in the
heterogeneous setting. To compute Bennett's inequality in the
heterogeneous setting, we use the formula in
Theorem~\ref{thm:bennett}. Similarly, by replacing the $h(x)$ function
with the $g(x)$ function, we compute Bernstein's inequality. All three
of these approaches ignore information in the $L_i$ values. To compute
a bound using Hoeffding's inequality, we apply
Theorem~\ref{thm:hoeffding} which ignores information about the
$\sigma_i$ and $\mu_i$ values.

In Figure~\ref{fig:scatter1} and Figure~\ref{fig:scatter2}, we see the
log-probabilities for the new bound on the x-axis and the
log-probabilities for the classical inequalities on the
y-axis. Several experiments are shown for $n=1,10,10$ and for
$z=1,2,10,100$. Whenever a coordinate marker is above the diagonal
line, the new bound is performing better for that particular random
experiment. Clearly, the new bound is outperforming Bennett's,
Bernstein's and Hoeffding's bounds.  When $n=1$ in the top of the
figures, we are back to the homogeneous case where the bound must
strictly outperform Bennett's and Bernstein's inequalities. It also
seems to frequently outperform Hoeffding's. As we increase $n$, the
advantages of the bound become even more dramatic in the heterogeneous
case.  When $z$ is small, the variances $\sigma_i^2$ are large and
potentially close to their maximum allowable values (e.g. prior to
violating the elementary inequality $\sigma_i \leq
(M_i-L_i)/2$). Therefore, variance does not provide much information
about the distribution of the random variables. Meanwhile, when $z$ is
large, the the variance values are smaller and Hoeffding's bound
becomes extremely loose since it ignores variance information.  The
new bound seems to frequently outperform the classical inequalities.

\section{An application in portfolio optimization}
\label{sct:applications}

There are many natural applications of Theorems~\ref{thm:betterhomo},
~\ref{thm:betterhetero}, and ~\ref{thm:betterreversed}.  As a 
motivating example, consider a financial portfolio with several
independent investments. Each investment $i$ will provide a payoff
$X_i$ from an unknown distribution. We may know a priori the minimum
payoff $L_i$, the expected payoff $\mu_i$ and the variance of the
payoff $\sigma_i^2$ for investment $i$. We are interested in the
probability that the sum total of our investments will under-perform
its expected value by $t$. For example, we may want to compute the
probability that the portfolio will {\em under-perform} and produce a
total payoff that is smaller than some risk-free payoff $\tau$. The
quantity of interest is $\Pr(\sum_{i=1}^n X_i \leq \tau)$.
%%  This may be crucial if the funds used to create the portfolio were
%%  borrowed from a lender who charges interest on the loan (if the
%%  interest is too high, investing in the portfolio may not be worthwhile at all).

Using Theorem~\ref{thm:betterreversed}, it is straightforward
to upper-bound the probability that a portfolio under-performs.  We
are given information about each $X_i$ such as its $L_i$, $\mu_i$ and
$\sigma_i$ and we are given a threshold $\tau$ for the portfolio to be
worthwhile. If the total payoff from the investment falls below
$\tau$, it has {\em under-performed}. Our upper bound holds without
making any further parametric assumptions about the distribution of
the payoffs $X_1,\ldots,X_n$. Conversely, many practitioners make
Gaussian assumptions or parametric assumptions about portfolios and
payoff distributions \cite{Mar52}. A non-parametric approach
remains agnostic and may be better matched to real-world
settings.

Consider the following toy example. We have $n=2$
investments. Investment 1 has an expected payoff of $\mu_1 = \$30$ with
a standard deviation of $\sigma_1=\$25$. Investment 2 has an expected
payoff of $\mu_2 = \$100$ with a standard deviation of
$\sigma_2=\$20$. Investment 1 has a floor on its payoff of
$L_1=\$25$. Meanwhile, Investment 2 can potentially yield as little as 
$L_2=\$5$ in terms of payout. For the portfolio to be worthwhile, we are
told that the total payoff of both investments must be at least $\$74$
(or the average payoff across both investments must be at least
$\$37$). Otherwise, the portfolio is under-performing.

According to our new bound in Theorem~\ref{thm:betterreversed},
the probability of under-performing is less than 39.1\%.  We next
apply Bennett's inequality and Bernstein's inequality to this
portfolio problem. Consider Bennett's inequality in
Theorem~\ref{thm:nicerbennett}. Just as we were able to reverse the
bound in Theorem~\ref{thm:betterhetero} to obtain
Theorem~\ref{thm:betterreversed}, it is possible to obtain
reversed versions of Bennett's and Bernstein's inequalities. Bennett's
says the probability is at most 50.1\% and Bernstein says it is at most
57.2\%.

To apply Hoeffding's inequality, we also need an upper bound on the
investment's payouts (e.g. $M_1$ and $M_2$). Recall the elementary
formula $\sigma_i \leq (M_i-L_i)/2$. This gives us the {\em
  most-optimistic} value for ${\hat M}_i=\max(L_i+2\sigma_i, \mu_i)$.
This setting helps tighten the Hoeffding bound as much as
possible. Technically, the {\em most-optimistic} setting of the
Hoeffding bound can be quite erroneous and misleading. In fact, if we
make additional assumptions about the problem, then Hoeffding is not
actually computing a bound. We are making assumptions that may not be
true about the original problem. Nevertheless, we use the
heterogeneous $L_i$ and imputed ${\hat M}_i$ in
Theorem~\ref{thm:hoeffding}. Hoeffding says the probability of
under-performing is less than 58.1\%. Surprisingly, this is still
worse than the (valid) estimate using our novel bound.

The new bound gives the best estimate and shows that the payoff on our
investments is more likely to meet the target total payoff of $\$74$
(or an {\em average} payoff of $\$37$ across both investments). The
bound on the probability of 39.1\% shows that the investment portfolio
is worth the risk.

Rather than simply {\em bound} the probability a given portfolio
under-performs, we may wish to {\em find an optimal portfolio}.
Consider $n$ possible investments where we aim to
optimally allocate funds by computing the proportion $\alpha_i\geq 0$ of our budget that is allocated to
investment $i$ for $i=1,\ldots n$. Clearly, budget proportions
sum to unity and therefore $\sum_{i=1}^n \alpha_i=1$. Let $X_i$ represent a random
variable which equals the return of the $i$'th investment. Assume we
know a priori that $X_i$ has mean $\mu_i$, deviation $\sigma_i$ and floor
$L_i$. We wish to find $\alpha_1,\ldots,\alpha_n$ that minimize the
probability $\Phi$ that the portfolio generates less than some
targeted return $\tau$. We can compute $\Phi$ as
follows
\[
\Phi &=& \Pr\left ( \sum_{i=1}^n \alpha_i X_i \leq \tau \right ) \:\:
= \:\:  \Pr \left (
\frac{1}{n} \sum_{i=1}^n {\tilde X}_i - \frac{1}{n} \sum_{i=1}^n
  {\tilde \mu}_i \leq -\frac{1}{n} t \right ).
\]
On the right, we have merely rewritten the probability after the change of variables ${\tilde
  X}_i=\alpha_i X_i$ and $t = \sum_{i=1}^n {\tilde \mu}_i -
\tau$. The new random variables ${\tilde X}_i$ have mean
${\tilde \mu}_i = \alpha_i \mu_i$, variance ${\tilde
  \sigma_i}^2=\alpha_i^2 \sigma_i^2$ and floor ${\tilde L}_i=\alpha_i
L_i$. Apply Theorem~\ref{thm:betterreversed} which holds for any
$\lambda \geq 0$ to obtain
\[
\Phi &\leq& e^{-\lambda t} \prod_{i=1}^n \left (
\frac{\alpha_i^2 \sigma_i^2}{\alpha_i^2 (\mu_i - L_i)^2} \left (
  e^{\lambda \alpha_i (\mu_i - L_i)} - 1 - \lambda \alpha_i(\mu_i -
  L_i) \right ) + 1 \right ) \\
&=& e^{-\lambda t} \prod_{i=1}^n \left (
\frac{\sigma_i^2}{(\mu_i - L_i)^2} \left (
  e^{\lambda_i (\mu_i - L_i)} - 1 - \lambda_i(\mu_i -
  L_i) \right ) + 1 \right )
\]
where in the second line we have simply defined $\lambda_i=\alpha_i \lambda$. Insert the definition of $t$ into the above
\[
\Phi
& \leq & e^{-\lambda (\sum_{i=1}^n \alpha_i
  (\mu_i - \tau))} \prod_{i=1}^n \left (
\frac{\sigma_i^2}{(\mu_i - L_i)^2} \left (
  e^{\lambda_i (\mu_i - L_i)} - 1 - \lambda_i(\mu_i -
  L_i) \right ) + 1 \right ).
\]
Recall that $\lambda_i = \lambda \alpha_i$ and therefore
\[
\Phi & \leq & \prod_{i=1}^n e^{- \lambda_i 
  (\mu_i - \tau)} \left (
\frac{\sigma_i^2}{(\mu_i - L_i)^2} \left (
  e^{\lambda_i (\mu_i - L_i)} - 1 - \lambda_i(\mu_i -
  L_i) \right ) + 1 \right ). 
\]
We wish to minimize the bound on the right hand side over $\lambda
\geq 0$ as
well as to minimize it over $\alpha_i \geq 0$ for $i=1,\ldots,n$ subject to $\sum_{i=1}^n
\alpha_i=1$. An equivalent problem is to minimize the right hand side
of the inequality over $\lambda_i \geq 0$ for $i=1,\ldots,n$. The solution can be found by independently minimizing each term in the
product $\prod_{i=1}^n$ above over the $\lambda_i$ that appears in
it. The solution for $\lambda_i$ is a
straightforward reapplication of the derivations in the proof of
Theorem~\ref{thm:betterhetero},
\[
\lambda_i&=&
\frac{1}{\mu_i-\tau}+\frac{\mu_i-L_i}{\sigma_i^2}-\frac{1}{\mu_i-L_i}\left
  ( 1 + {\cal W}
\left ( \left (
      \frac{\tau-L_i}{\mu_i-\tau} \right) e^{
      \frac{\mu_i-L_i}{\mu_i-\tau} + \frac{(\mu_i-L_i)^2}{\sigma_i^2}
      -1  } \right) \right )
\]
and shows that we must require $\tau \in [\mu_i,L_i]$ to obtain valid
numerical solutions.

To recover the optimal proportions for our budget, we merely
compute $\alpha_i=\lambda_i / \sum_{i=1}^n \lambda_i$. Given a $\tau$,
to recover the
optimized upper bound on $\Phi$, insert the suggested values of
$\lambda_i$ into the final bound above. Alternatively, rather than
specifying a $\tau$ value, a user may prefer to specify how much
deviation from the expected return he is willing to tolerate by
selecting $t$. In that case, to avoid numerical problems, it is
straightforward to see that $t \in (0,\min_i (\mu_i-L_i))$.

Figure~\ref{fig:portfolio2} depicts the upper bound on the probability
we will obtain a return less than $\tau$ for 3 investments having a
floor of 0 and $\mu_1=0.3030, \mu_2=0.2400, \mu_3=0.6178$ with
$\sigma_1=0.2601,\sigma_2=0.5248,\sigma_3=0.7645$ (top panel). In the bottom panel, the optimal
portfolio distribution is depicted across $\tau$ values. 
Figure~\ref{fig:portfolio1} depicts the upper bound on the probability
we will obtain a return less than $\tau$ for 4 investments having a
floor of 0 and $\mu_1=0.1474, \mu_2=0.6088, \mu_3=0.1785, \mu_4=0.7585$ with
$\sigma_1=0.0593,\sigma_2=0.6218,\sigma_3=0.2183,\sigma_4=0.4597$ (top panel). In the bottom panel, the optimal
portfolio distribution is depicted across $\tau$ values.

\section{Conclusions}

A new bound was proposed that characterizes the convergence of the
average of independent bounded random variables towards its expected
value. In the homogeneous case, the new bound is strictly sharper than
Bennett's and Bernstein's inequalities and very often outperforms
Hoeffding's inequality.  The bound also readily applies in settings
where the random variables are not identically-distributed and may
have heterogeneous values for their range, expected value, and
variance. In the heterogeneous case, the new bound sometimes
dramatically outperforms the classical inequalities as well. The bound
appears useful in portfolio optimization as well as potentially other
application areas. Deriving the bound involved the use of a peculiar
transcendental function known as Lambert's ${\cal W}$ function.  While
Lambert's ${\cal W}$-function has been known since the 1779 paper by
Leonhard Euler it has only been popularized in the 1980's. It may be
helpful in the development of other concentration inequalities.

\appendix

\section{Lambert's {\cal W}}
\label{sct:lambert}

The Lambert ${\cal W}$ function is defined
by the solution of the equation
\[
{\cal W} &=& \ln(x) - \ln({\cal W}).
\]
Although this function is now integrated in most modern
mathematical software, an iterative scheme is provided below to show
how it is numerically recovered. The proposed numerical scheme
estimates ${\cal W}(x)$ by first guessing a solution ${\cal W}$ and
then using the following iterations until convergence:
\[
{\cal W} \leftarrow {\cal W} - \frac{ {\cal W} \exp({\cal W})-x} { ({\cal W}+1)\exp({\cal W})- ({\cal W}+2) \frac{{\cal W}\exp({\cal W})-x}{2 {\cal W}+2} }.
\]
An alternative is to manipulate Lambert's ${\cal W}$ function
after the exponentiation operation which yields better numerical
results and solves the equation
\[
{\cal W}(\exp(x)) &=& x - \ln({\cal W}).
\]
If $x \geq 0$, initialize with ${\cal W}=x$ and iterate
the following rule to recover ${\cal W}(\exp(x))$:
\[
{\cal W} & \leftarrow & {\cal W} - \frac{{\cal W}-\exp(x-{\cal W})}{{\cal W}+1-({\cal W}+2) \frac{{\cal W}-\exp(x-{\cal W})}{2{\cal W}+2} }.
\]
If $x \leq 0$, initialize with ${\cal W}=1$ and
iterate the following rule to recover ${\cal W}(\exp(x))$:
\[
{\cal W} \: \leftarrow \: {\cal W} - \frac{{\cal W}\exp({\cal W}-x)-1}{({\cal W}+1)\exp({\cal W}-x)-({\cal W}+2)\frac{{\cal W}\exp({\cal W}-x)-1}{2 {\cal W}+2}}
.
\]
These updates circumvent numerical problems that may arise during
evaluation of the bound in Theorems~\ref{thm:betterhomo}, ~\ref{thm:betterhetero}, and ~\ref{thm:betterreversed}.

\section{Closed-form minimizer}

The following is useful to prove Theorem~\ref{thm:betterhetero}.

\begin{thm}
The minimizer of $\log( q (e^{\lambda r } - {\lambda r} - 1) + 1) - \lambda t$ is non-negative
for $q \geq 0, r \geq 0$, and $t\in(0,r)$.
\label{thm:nonnegative}
\end{thm}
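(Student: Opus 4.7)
The plan is to analyze the function $f(\lambda) = \log\bigl(q(e^{\lambda r} - \lambda r - 1) + 1\bigr) - \lambda t$ directly. By convexity of the exponential, $e^{\lambda r} - \lambda r - 1 \geq 0$ for every $\lambda$, and together with $q \geq 0$ this makes the argument of the logarithm at least $1$; hence $f$ is well-defined and smooth on the entire real line. In particular $f(0) = 0$, which provides a convenient reference value. Strictly speaking one should also assume $q > 0$ and $r > 0$: if $r = 0$ then $t \in (0,r)$ is vacuous, and if $q = 0$ then $f$ degenerates to $-\lambda t$ with no finite minimizer.

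Next I would compute
$$f'(\lambda) = \frac{qr(e^{\lambda r} - 1)}{q(e^{\lambda r} - \lambda r - 1) + 1} - t.$$
For every $\lambda \leq 0$, the denominator is at least $1$ while the numerator is $\leq 0$, so $f'(\lambda) \leq -t < 0$. This forces $f$ to be strictly decreasing on $(-\infty, 0]$, and therefore $f(\lambda) > f(0) = 0$ for every $\lambda < 0$. On the other hand, $f'(0) = -t < 0$ forces $f$ to drop strictly below $0$ just to the right of the origin, so $\inf_\lambda f(\lambda) < 0$.

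It remains to argue that this infimum is attained, because once that is established the sign analysis above forces any minimizer to lie in $(0,\infty)$. To see attainment I would examine the ends: as $\lambda \to +\infty$ the argument of the log grows like $q e^{\lambda r}$, so $f(\lambda) \sim \lambda (r-t)$ which tends to $+\infty$ because $t < r$; as $\lambda \to -\infty$ the argument of the log grows only linearly in $|\lambda|$, so $f(\lambda)$ is eventually dominated by $-t\lambda = t|\lambda| \to +\infty$. Continuity together with these two limits guarantees a global minimizer, and that minimizer is positive, hence non-negative.

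The main substantive step is the derivative sign computation on $(-\infty, 0]$, which rests entirely on the elementary inequality $e^x \geq 1 + x$. Everything else is routine continuity and behavior-at-infinity bookkeeping, so I do not anticipate a serious obstacle beyond sorting out the degenerate parameter values mentioned above.
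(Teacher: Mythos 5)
Your proof is correct, and it takes a genuinely different route from the paper's. The paper writes down the closed-form minimizer in terms of Lambert's $\mathcal{W}$ function, reduces non-negativity to the inequality $x \geq \mathcal{W}((x-y)e^x)$ with $x \geq y \geq 0$, and settles it using monotonicity of $\mathcal{W}$ together with the identity $\mathcal{W}(xe^x)=x$. You instead avoid $\mathcal{W}$ entirely: the sign computation $f'(\lambda) \leq -t < 0$ on $(-\infty,0]$, the observation $f'(0)=-t<0$, and the coercivity of $f$ at both ends give a direct calculus argument. Your version buys several things the paper's does not: it is elementary and self-contained (no properties of the Lambert function are needed), it actually establishes \emph{existence} of a global minimizer rather than taking the closed-form expression on faith, it yields the slightly stronger conclusion that the minimizer is strictly positive, and it correctly flags that the hypotheses must implicitly be $q>0$ and $r>0$ for the statement to be non-vacuous (the paper is silent on this). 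What the paper's approach buys in exchange is verification that the specific $\mathcal{W}$-based formula used in Theorem~\ref{thm:betterhetero} is non-negative, which is the quantity actually plugged into the main bound; your argument shows the minimizer is non-negative without certifying that the displayed formula computes it. For the theorem as literally stated, your proof is complete and arguably preferable.
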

\begin{proof}
The minimizer is given by the closed-form expression 
\[
\lambda^* &=& \frac{1}{t}+\frac{1}{qr}-\frac{1}{r}-\frac{1}{r} {\cal W}
\left ( \exp \left ( \frac{r}{t} + \frac{1}{q} -1 + \ln\left (
      \frac{r-t}{t} \right) \right ) \right).
\]
Non-negativity therefore implies
\[
\frac{r}{t} + \frac{1}{q} - 1 & \geq & {\cal W} \left ( \exp \left (
    \frac{r}{t} + \frac{1}{q} -1 + \ln\left ( \frac{r-t}{t} \right)
  \right ) \right).
\]
Consider the change of variables $x=\frac{r}{t}+\frac{1}{q}-1$ and $y=\frac{1}{q}$.  Inserting these into the above required inequality yields
\begin{eqnarray}
x & \geq & {\cal W}( (x-y) e^{x} ).
\label{eqn:xandy}
\end{eqnarray}
Since, by definition $t \in (0,r)$, it is clear that $x
\geq y \geq 0$. Equation~\ref{eqn:xandy} is certainly true if the
maximization over allowable $y$ on the right hand side (a stricter
requirement) holds. In other words, if the following is true
\[
x & \geq & \max_{y \: {\rm s.t.} \: x \geq y \geq 0} {\cal W}( (x-y) e^{x} ),
\]
then Equation~\ref{eqn:xandy} must hold.  The maximum of the right
hand side over $y$ is attained when $y=0$ since ${\cal W}$ is a
monotonically increasing function in its argument. Thus, setting $y=0$
yields
\[
x & \geq & {\cal W}(xe^x ).
\]
By definition, $\vartheta = {\cal W}(\vartheta) \exp({\cal W}(\vartheta))$. Therefore, ${\cal W}(x e^x) = x$ (as long as $x \geq 0$ which has been established). Therefore, the above stricter requirement is equivalent to
\[
x & \geq & x
\]
which clearly holds. This demonstrates the optimal solution for
$\lambda$ remains non-negative as is required for a valid application
of Markov's inequality.
\end{proof}

\section{Bennett's inequality}
\label{sct:bennettproof}

Since so many variations of Bennett's inequality have been used in the
literature, we rederive it below in the generalized setting.

\begin{thm}
\label{thm:nicerbennett}
  For a collection $X_1,\ldots,X_n$ of independent random variables
  satisfying $X_i \leq M_i$, $\mathrm{E}[X_i]=\mu_i$ and
  $\mathrm{E}[(X_i-\mu_i)^2]=\sigma_i^2$ for $i=1,\ldots,n$ and for
  any $t \geq 0$, the following holds
\[
\Pr\left ( \frac{1}{n} \sum_{i=1}^n X_i -  \frac{1}{n} \sum_{i=1}^n \mathrm{E}[X_i ] \geq t \right ) & \leq & \exp \left ( - n  \frac{v}{s^2} h \left ( \frac{ts}{v} \right ) \right )
\]
where $h(x) = (1+x) \ln(1+x) - x$, $s=\max_i M_i-\mu_i$ and
$v=\frac{1}{n} \sum_{i=1}^n \sigma_i^2$.
\end{thm}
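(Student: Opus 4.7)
The plan is to follow the classical Chernoff route: center the variables, apply Markov's inequality to the exponentiated sum, bound each per-term moment generating function by a scalar that depends only on $\sigma_i^2$ and the uniform ceiling $s$, and finally optimize over $\lambda \geq 0$ to recover the function $h$.

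First I would set $Y_i = X_i - \mu_i$, so that $\mathrm{E}[Y_i]=0$, $\mathrm{E}[Y_i^2]=\sigma_i^2$, and $Y_i \leq M_i - \mu_i \leq s$. Rewriting the probability of interest as $\Pr(\sum_i Y_i \geq nt) = \Pr(e^{\lambda \sum_i Y_i} \geq e^{\lambda nt})$ and invoking Markov's inequality together with independence, exactly as in the first few lines of the proof of Theorem~\ref{thm:betterhetero}, gives
\[
\Pr \leq e^{-\lambda nt} \prod_{i=1}^n \mathrm{E}\!\left[e^{\lambda Y_i}\right]
\]
for every $\lambda \geq 0$.

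The key technical step, and the main obstacle, is to bound $\mathrm{E}[e^{\lambda Y_i}]$ in a form that retains the second-moment information while using only the single ceiling $s$ shared across all $i$. I would establish the pointwise inequality $e^{\lambda y} - 1 - \lambda y \leq y^2 (e^{\lambda s} - 1 - \lambda s)/s^2$, valid for every $y \leq s$ and every $\lambda \geq 0$. The cleanest route is to observe that $\phi(y) := (e^{\lambda y} - 1 - \lambda y)/y^2$, extended by continuity to $\lambda^2/2$ at $y = 0$, is non-decreasing on $\mathbb{R}$; this reduces to checking that $y\phi'(y) - 2\phi(y)$ vanishes at $0$ with second derivative of sign $\operatorname{sgn}(y)$. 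Taking expectations, using $\mathrm{E}[Y_i]=0$, $\mathrm{E}[Y_i^2]=\sigma_i^2$, and $1+x \leq e^x$, then yields the single-variable bound
\[
\mathrm{E}\!\left[e^{\lambda Y_i}\right] \leq \exp\!\left(\frac{\sigma_i^2}{s^2}\bigl(e^{\lambda s} - 1 - \lambda s\bigr)\right).
\]

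Multiplying across $i$ and using $\sum_i \sigma_i^2 = nv$ collapses the product into a clean exponential, giving $\Pr \leq \exp\bigl(-\lambda nt + (nv/s^2)(e^{\lambda s} - 1 - \lambda s)\bigr)$. To finish, I would minimize the exponent over $\lambda \geq 0$: setting its derivative to zero gives $e^{\lambda s} = 1 + ts/v$ and hence $\lambda^{\star} = s^{-1}\ln(1 + ts/v) \geq 0$ whenever $t \geq 0$. Substituting $\lambda^{\star}$ and regrouping with $x = ts/v$ collapses the exponent to $-(nv/s^2)\bigl[(1+x)\ln(1+x) - x\bigr] = -n(v/s^2)\, h(ts/v)$, which is the stated bound.
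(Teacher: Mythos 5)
Your proof is correct, and it follows the same overall Chernoff skeleton as the paper's: centering, Markov's inequality applied to the exponentiated sum, the per-term bound $\mathrm{E}[e^{\lambda Y_i}] \le 1 + \tfrac{\sigma_i^2}{s^2}\bigl(e^{\lambda s}-1-\lambda s\bigr)$, the loosening $1+x\le e^x$ to collapse the product, and the identical optimization over $\lambda$ producing $h$. The one place you genuinely diverge is in how that per-term moment-generating-function bound is established. The paper argues in the $\lambda$ variable: it posits a bound of the form $\gamma_i e^{\lambda s}+1-\gamma_i-\gamma_i s\lambda$, notes tangential contact with the MGF at $\lambda=0$, and compares second derivatives in $\lambda$, using $e^{\lambda(Y_i-s)}\le 1$ to reduce the curvature condition to $\gamma_i\ge\sigma_i^2/s^2$. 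You instead argue pointwise in $y$: the monotonicity of $\psi(u)=(e^u-1-u)/u^2$ on $\mathbb{R}$ gives $e^{\lambda y}-1-\lambda y\le y^2\bigl(e^{\lambda s}-1-\lambda s\bigr)/s^2$ for all $y\le s$, after which taking expectations and using $\mathrm{E}[Y_i]=0$, $\mathrm{E}[Y_i^2]=\sigma_i^2$ finishes the lemma. Both are valid; yours is the classical route (essentially Bennett's own second proof, and the one in Boucheron--Lugosi--Massart), and it has the virtue of isolating the analytic content in a deterministic one-variable inequality rather than an argument about curvatures of expectations. The only sketchy point is your stated verification of the monotonicity of $\psi$ via the sign of a second derivative of $y\phi'(y)-2\phi(y)$; the cleanest justification is the integral representation $\psi(u)=\int_0^1\!\int_0^1 \alpha\, e^{u\alpha\beta}\,d\alpha\,d\beta$, which is manifestly non-decreasing in $u$. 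Everything else, including $\lambda^{\star}=s^{-1}\ln(1+ts/v)\ge 0$ and the regrouping of the exponent into $-n(v/s^2)h(ts/v)$, matches the paper exactly.
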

\begin{proof}
Consider the probability of interest
\[
\Pr &=& \Pr\left ( \frac{1}{n} \sum_{i=1}^n X_i - \frac{1}{n}
  \sum_{i=1}^n \mathrm{E}[X_i] \geq t \right ).
\]
Consider translated versions of the random variables $Y_i = X_i -
\mu_i$. We now have $\mathrm{E}[Y_i]=0$,
$\mathrm{E}[Y_i^2]=\sigma_i^2$ and $\Pr(Y_i \in
(-\infty,M_i-\mu_i])=1$. Applying the change of variables $Y_i = X_i -
\mu_i$ does not change the probability of interest
\[
\Pr &=& \Pr\left ( \frac{1}{n} \sum_{i=1}^n Y_i \geq t \right ) \\
 &=& \Pr\left ( e^{ \lambda \sum_{i=1}^n Y_i} \geq e^{\lambda n t} \right )
\]
where the second line holds for any $\lambda\geq 0$ by monotonic
transformation of the probability. We then apply Markov's inequality
as follows:
\[
\Pr &\leq& \inf_{\lambda \geq 0} e^{-\lambda nt} \mathrm{E} \left [ e^{\lambda \sum_{i=1}^n Y_i} \right ] \\
&=& \inf_{\lambda \geq 0} e^{-\lambda nt} 
\prod_{i=1}^n \mathrm{E} [ e^{\lambda Y_i} ].
\]
Above, the second line follows from the independence of the random
variables. Consider bounding a single term in the product, in other
words $\mathrm{E}[e^{\lambda Y_i}]$.
Begin by the
conjecture (which will be subsequently proved) that the following
upper bound holds for appropriate choices of the parameters
$(\alpha_i,\beta_i,\gamma_i)$ for all $\lambda \geq 0$:
\[
\mathrm{E}[e^{\lambda Y_i}] & \leq & \gamma_i \exp(\lambda \alpha_i) + 1 -\gamma_i - \beta_i \lambda.
\]
Clearly, when $\lambda=0$, both sides of the conjectured bound
are unity and equality is achieved.  When $\lambda=0$, the derivative
of the left hand side is also zero since
\[
\left . \frac{\partial \mathrm{E}[e^{\lambda Y_i}]}{\partial \lambda} \right |_{\lambda=0} = \mathrm{E}[Y_i] = 0.
\]
For the bound to hold locally around $\lambda=0$ both left hand side
and right hand side must have equal derivatives when they attain
equality at $\lambda=0$. This tangential contact ensures the bound
will not cross the original function as $\lambda$ varies. This forces
the following choice for the second parameter of the bound
\[
\left . \frac{\partial \left ( \gamma_i \exp(\lambda \alpha_i) + 1 -\gamma_i -  \beta_i \lambda \right )}{\partial \lambda} \right |_{\lambda=0} &=& 0 \\
\beta_i &=& \gamma_i \alpha_i.
\]
Let $s=\max_i M_i-\mu_i$. Choose 
$\alpha_i=s$ and, therefore, to satisfy the above tangential
contact constraint, it is necessary that $\beta_i = \gamma_i s$. The
conjectured bound is now
\[
\mathrm{E}[e^{\lambda Y_i}] & \leq & \gamma_i \exp(\lambda s) + 1 -\gamma_i - \gamma_i s \lambda.
\]
Since the above inequality makes tangential contact, taking second
derivatives of both sides with respect to $\lambda$ gives a
conservative curvature test to ensure that the bound holds. If the
right hand side has higher curvature everywhere and makes tangential
contact at $\lambda=0$ then it upper-bounds the left hand
side.  Taking second derivatives of both sides with respect to
$\lambda$ produces the curvature constraint
\[
\frac{\partial^2 \mathrm{E}[e^{\lambda Y_i}]}{\partial \lambda ^2} & \leq & 
\frac{\partial^2 \left (
 \gamma_i \exp(\lambda s) + 1 -\gamma_i - \gamma_i s \lambda
\right )
}{\partial \lambda ^2}
\\
 \mathrm{E}[e^{\lambda Y_i}(Y_i)^2]
&\leq & \gamma_i \exp(\lambda s) s^2.
\]
Divide both sides by $\exp(\lambda s)$ to obtain
\[
 \mathrm{E}[e^{\lambda (Y_i-s)}(Y_i)^2] &\leq & \gamma_i s^2.
\]
Note that $\exp(\lambda(Y_i-s)) \leq 1$ since $Y_i \leq M_i-\mu_i$,
$s=\max_i M_i -\mu_i$ and $\lambda \geq 0$. Replacing
$\exp(\lambda(Y_i-s))$ in the expectation with $1$ gives the following
stricter condition on $\gamma_i$ to guarantee a bound:
\[
\mathrm{E}[Y_i^2] &\leq & \gamma_i s^2.
\]
Since $\mathrm{E}[Y_i^2]=\sigma_i^2$, the following setting for $\gamma_i$
guarantees that the curvature of the upper bound is larger than
that of the original function:
\[
\gamma_i &=& \frac{\sigma_i^2}{s^2}.
\]
Thus, with the parameters
$(\alpha_i=s,\beta_i=\frac{\sigma_i^2}{s},\gamma_i=\frac{\sigma_i^2}{s^2})$,
the conjectured bound holds for any choice of $\lambda \geq 0$ and is
tight at $\lambda=0$.  This upper bound applies to each term
as follows
\[
\mathrm{E}[e^{\lambda Y_i}] & \leq & 
\frac{\sigma_i^2}{s^2} \exp(\lambda s) + 1 - \frac{\sigma_i^2}{s^2} - \frac{\lambda \sigma_i^2}{s}.
\]
Rewrite the above expression as
\[
\mathrm{E}[e^{\lambda Y_i}] & \leq & 
\frac{\sigma_i^2}{s^2} \left ( e^{\lambda s} - 1
  - \lambda s \right ) + 1.
\]
Apply this upper bound to each individual term in the product
as follows
\[
\Pr & \leq &  \inf_{\lambda \geq 0} e^{-\lambda nt} \prod_{i=1}^n \mathrm{E}[e^{\lambda Y_i}]
\\ & \leq & \inf_{\lambda \geq 0} e^{-\lambda nt} \prod_{i=1}^n \left (
\frac{\sigma_i^2}{s^2} \left ( e^{\lambda s} - 1 - \lambda s \right ) + 1 \right ).
\]
Bennett then invokes the inequality
$1+x \leq \exp(x)$ as follows
\[
\Pr
 & \leq & \inf_{\lambda \geq 0} e^{-\lambda nt} \prod_{i=1}^n \left (
\frac{\sigma_i^2}{s^2} \left ( e^{\lambda s} - 1 - \lambda s \right ) + 1 \right ) \\
& \leq &  \inf_{\lambda \geq 0} e^{-\lambda nt} \prod_{i=1}^n \exp \left( \frac{\sigma_i^2}{s^2} \left ( e^{\lambda s} - 1 - \lambda s \right )  \right ).
\]
This inequality conveniently groups the $\exp()$ functions together
across the product $\prod_{i=1}^n$ to yield
\[
\Pr & \leq &  \inf_{\lambda \geq 0} \exp \left( \frac{\sum_{i=1}^n \sigma_i^2}{s^2} \left ( e^{\lambda s} - 1 - \lambda s \right )  - \lambda n t\right ).
\]
Minimizing the right hand side over $\lambda$ and re-inserting gives the bound
\[
\Pr & \leq &
\exp \left ( - n  \frac{v}{s^2} h \left ( \frac{ts}{v} \right ) \right )
\]
where $h(x) = (1+x) \ln(1+x) - x$ and $v=\frac{1}{n} \sum_{i=1}^n \sigma_i^2.$
\end{proof}

\bibliography{all}

\begin{thebibliography}{10}

\bibitem{Ben62}
G.~Bennett.
\newblock Probability inequalities for the sum of independent random variables.
\newblock {\em Amer. Stat. Assoc. J.}, 57:33--45, 1962.

\bibitem{Ber24}
S.N. Bernstein.
\newblock On a modification of {Chebyshev's} inequality and of the error
  formula of {Laplace}.
\newblock {\em Ann. Sci. Inst. Sav. Ukraine, Sect. Math. 1}, 4(5), 1924.

\bibitem{BouBouLug04}
O.~Bousquet, S.~Boucheron, and G.~Lugosi.
\newblock {\em Introduction to Statistical Learning Theory}, volume Lecture
  Notes in Artificial Intelligence 3176, pages 169--207.
\newblock Springer, Heidelberg, Germany, 2004.

\bibitem{Hoe63}
W.~Hoeffding.
\newblock Probability inequalities for sums of bounded random variables.
\newblock {\em Amer. Stat. Assoc. J.}, 58:13--30, 1963.

\bibitem{Mar52}
H.M. Markowitz.
\newblock Portfolio selection.
\newblock {\em The Journal of Finance}, 7(1):77--91, 1952.

\bibitem{Pro59}
Y.V. Prohorov.
\newblock An extremal problem in probability theory.
\newblock {\em Theory of Probability and Its Applications}, 4:201--203, 1959.

\bibitem{Roy2010}
R.~Roy and F.W.J. Olver.
\newblock {\em NIST Handbook of Mathematical Functions}, chapter Lambert W
  function.
\newblock Cambridge University Press, 2010.

\bibitem{Tal95}
M.~Talagrand.
\newblock Concentration of measure and isoperimietric inequalities in product
  spaces.
\newblock {\em Inst. Hautes \'Etudes Sci. Publ. Math.}, 81:73--205, 1995.

\bibitem{Tal95b}
M.~Talagrand.
\newblock The missing factor in {Hoeffding's} inequalities.
\newblock {\em Annales de l'Inst. H. Poincar\'e Probab. Statist., section B},
  31(4):689--702, 1995.

\bibitem{Tch1874}
P.~Tchebichef.
\newblock Sur les valeurs limites des int\'egrales.
\newblock {\em Journal de Math\'ematiques Pures et Appliqu\'ees},
  2(19):157--160, 1874.

\end{thebibliography}
\bibliographystyle{plain}

\begin{figure}[htbp]
  \center
\setlength\tabcolsep{2pt}
  \begin{tabular}[b]{cc}
    \epsfxsize=2.46in
    \epsfbox{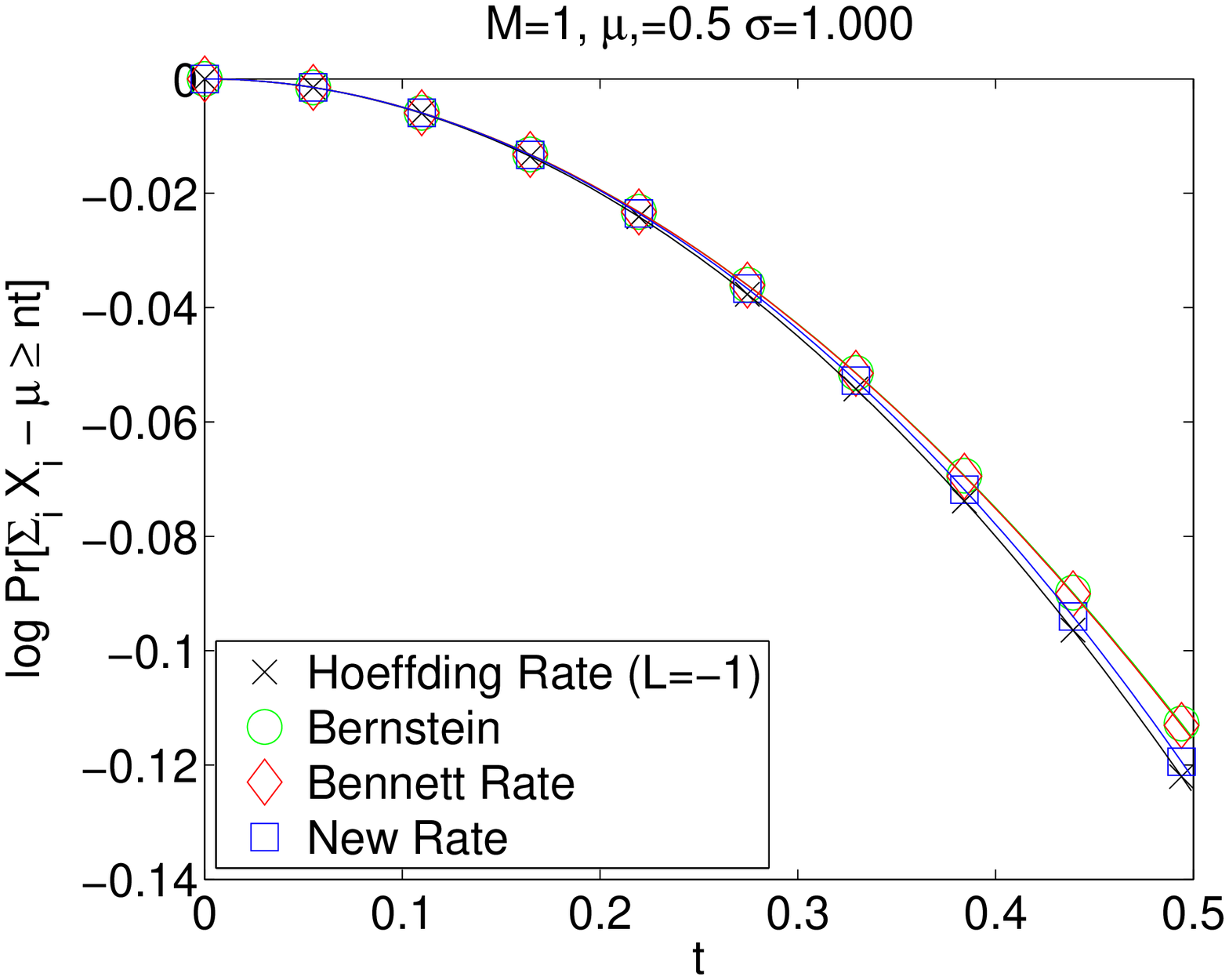} &
    \epsfxsize=2.46in
    \epsfbox{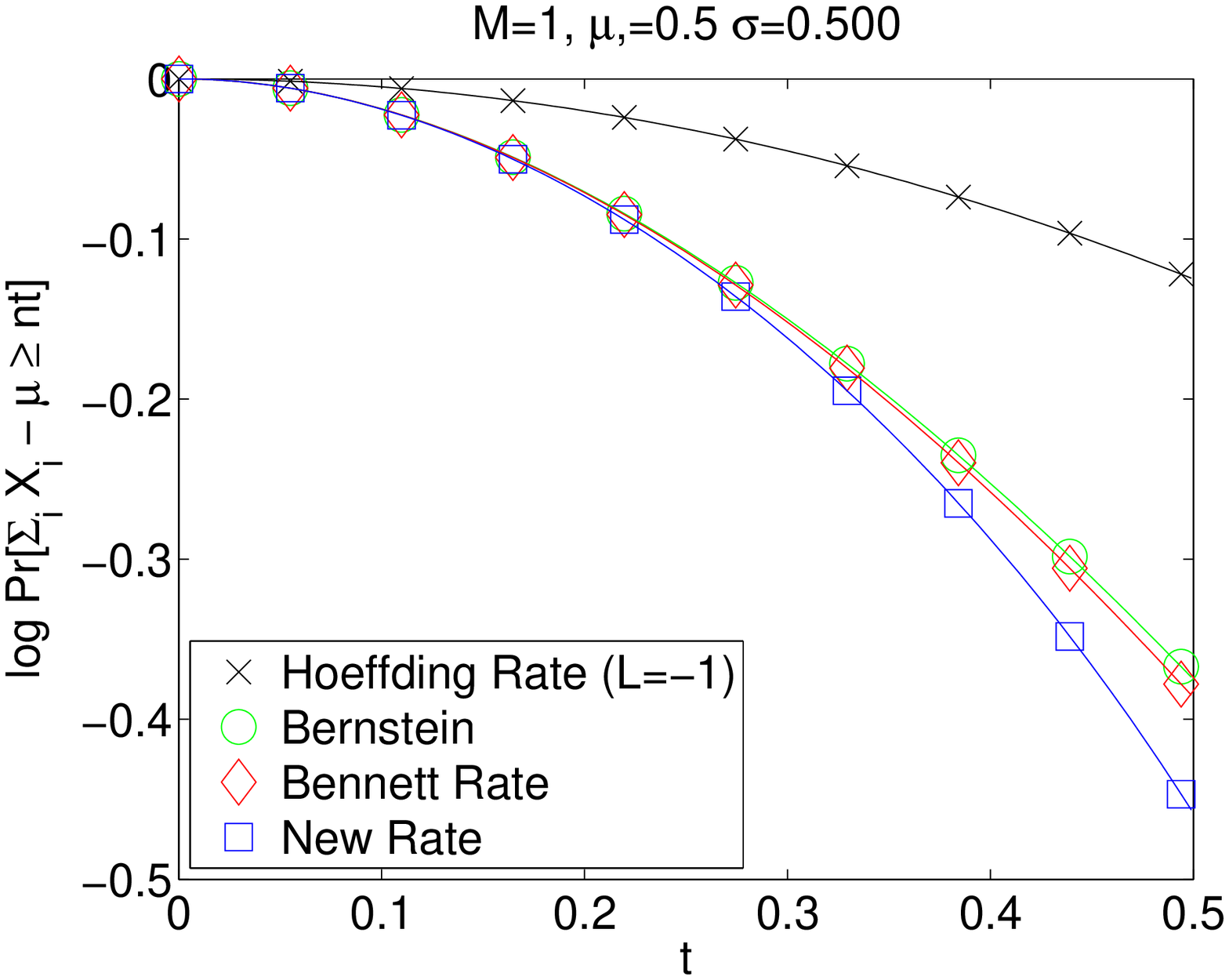} \\
    \epsfxsize=2.46in
    \epsfbox{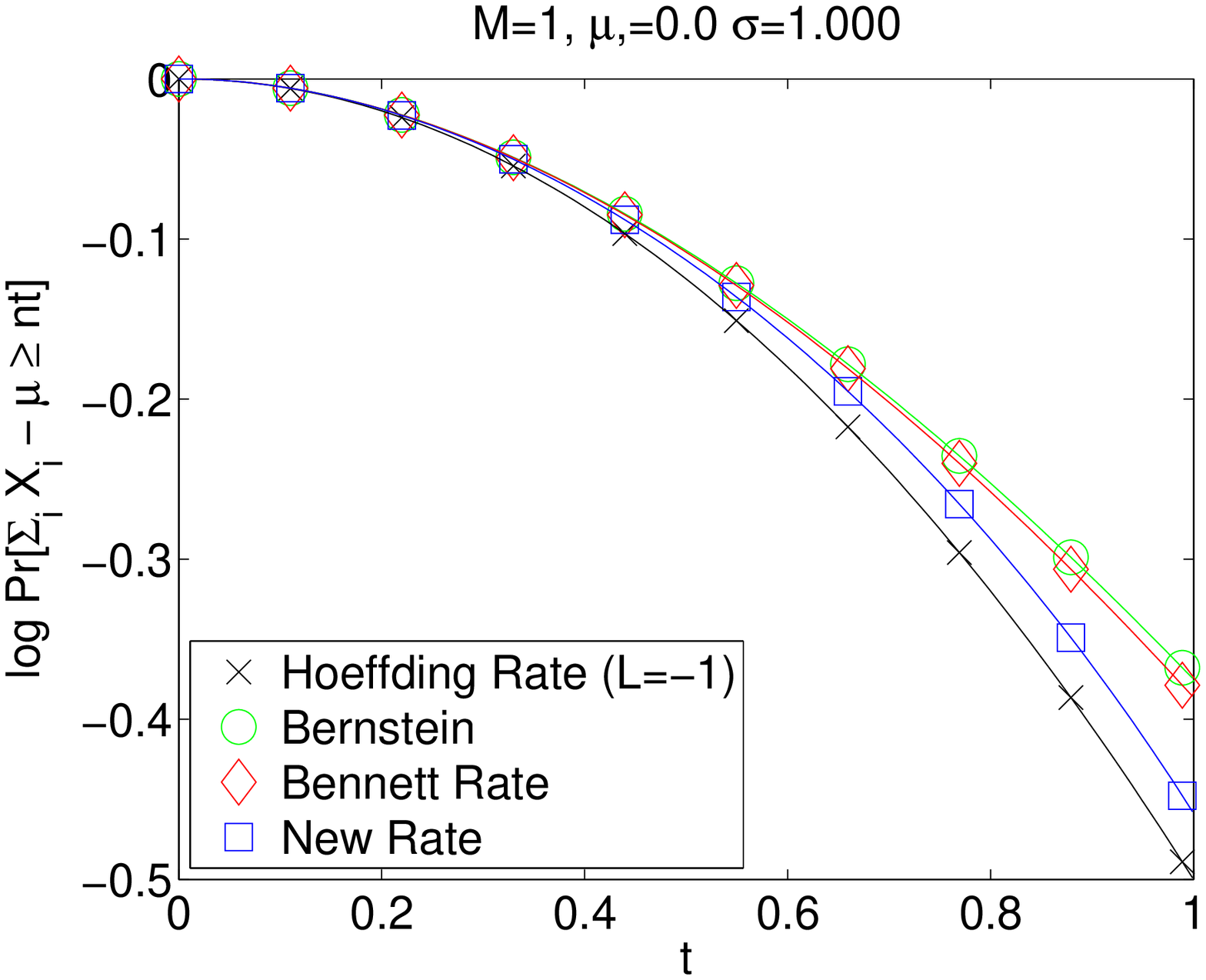} &
    \epsfxsize=2.46in
    \epsfbox{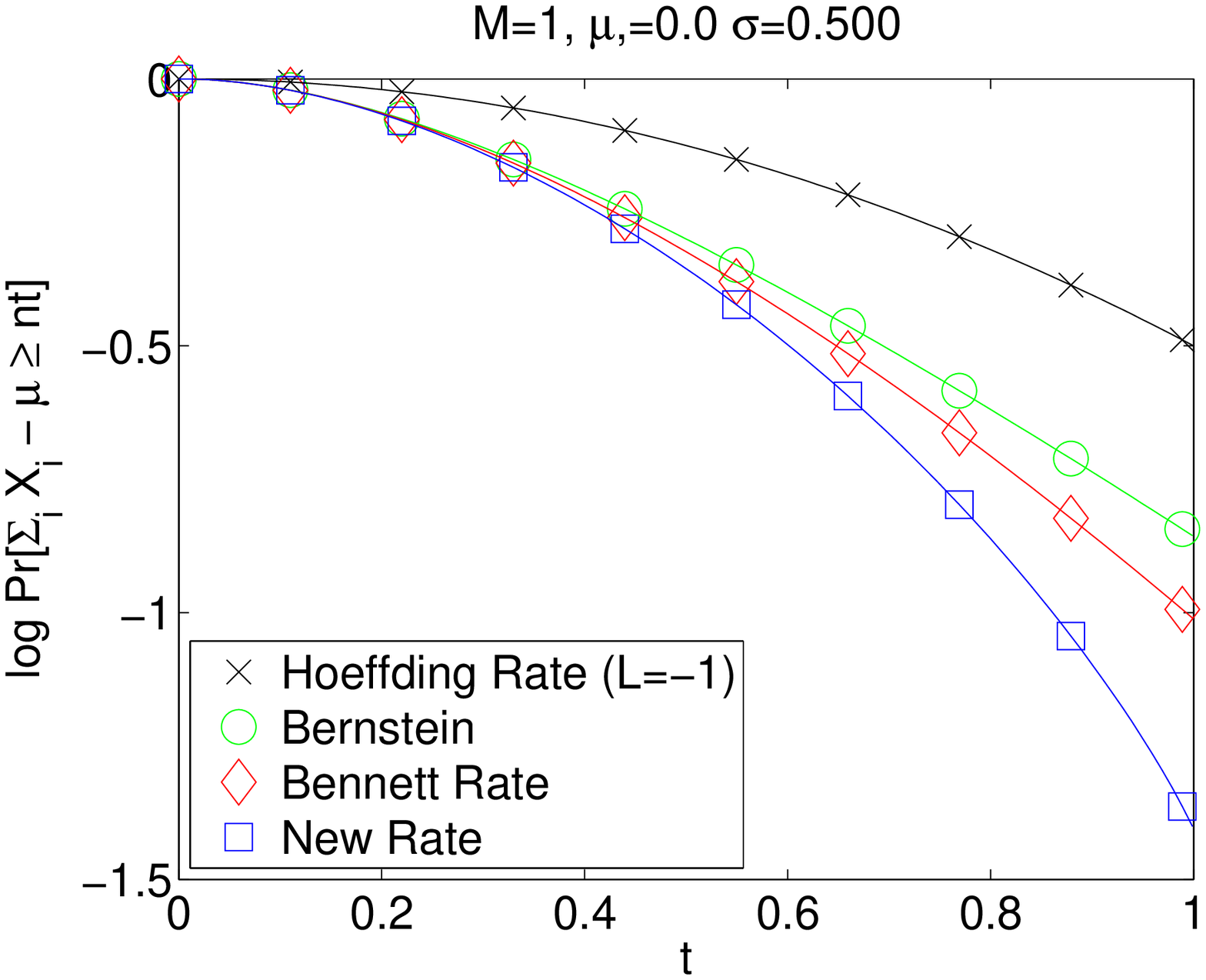} \\
    \epsfxsize=2.46in
    \epsfbox{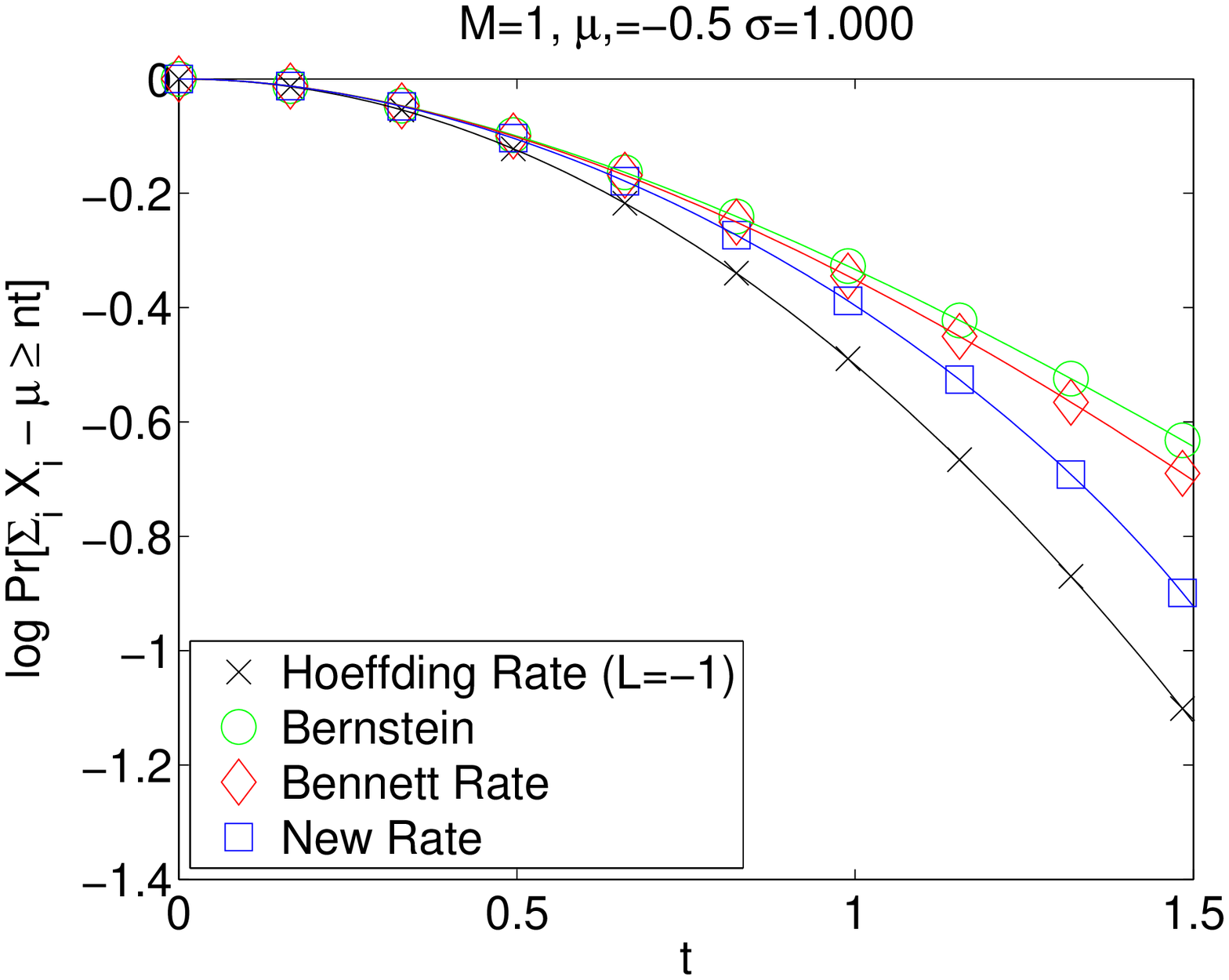} &
    \epsfxsize=2.46in
    \epsfbox{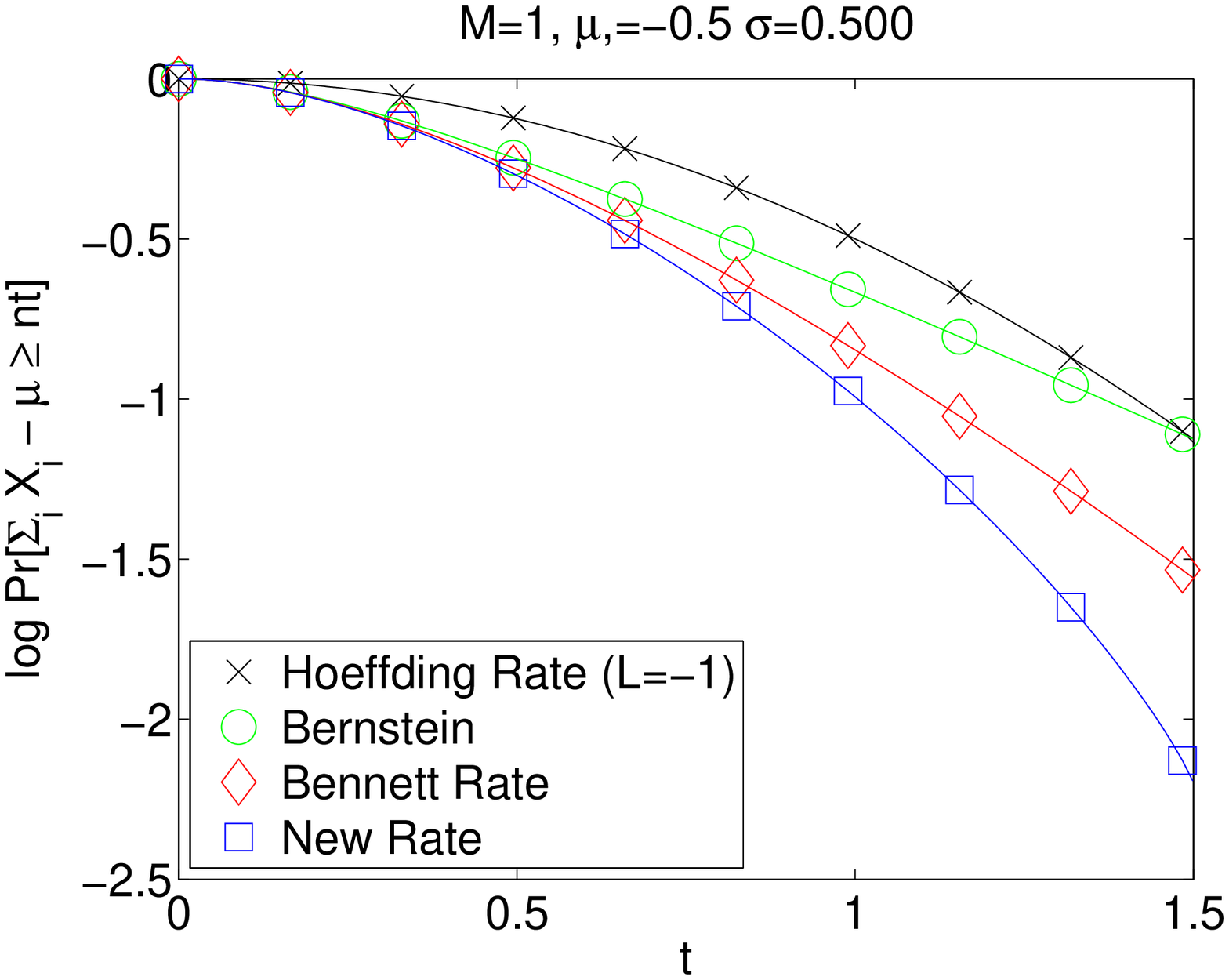} 
\end{tabular}
\caption[Comparing the four inequalities' convergence rates for homogeneous random variables under different $\mu$ and $\sigma$ settings. Lower values are better.]{Comparing the four inequalities' convergence rates for homogeneous random variables under different $\mu$ and $\sigma$ settings. Lower values are better.}
\label{fig:bounds1}
\end{figure}

\begin{figure}[htbp]
  \center
\setlength\tabcolsep{2pt}
  \begin{tabular}[b]{cc}
    \epsfxsize=2.46in
    \epsfbox{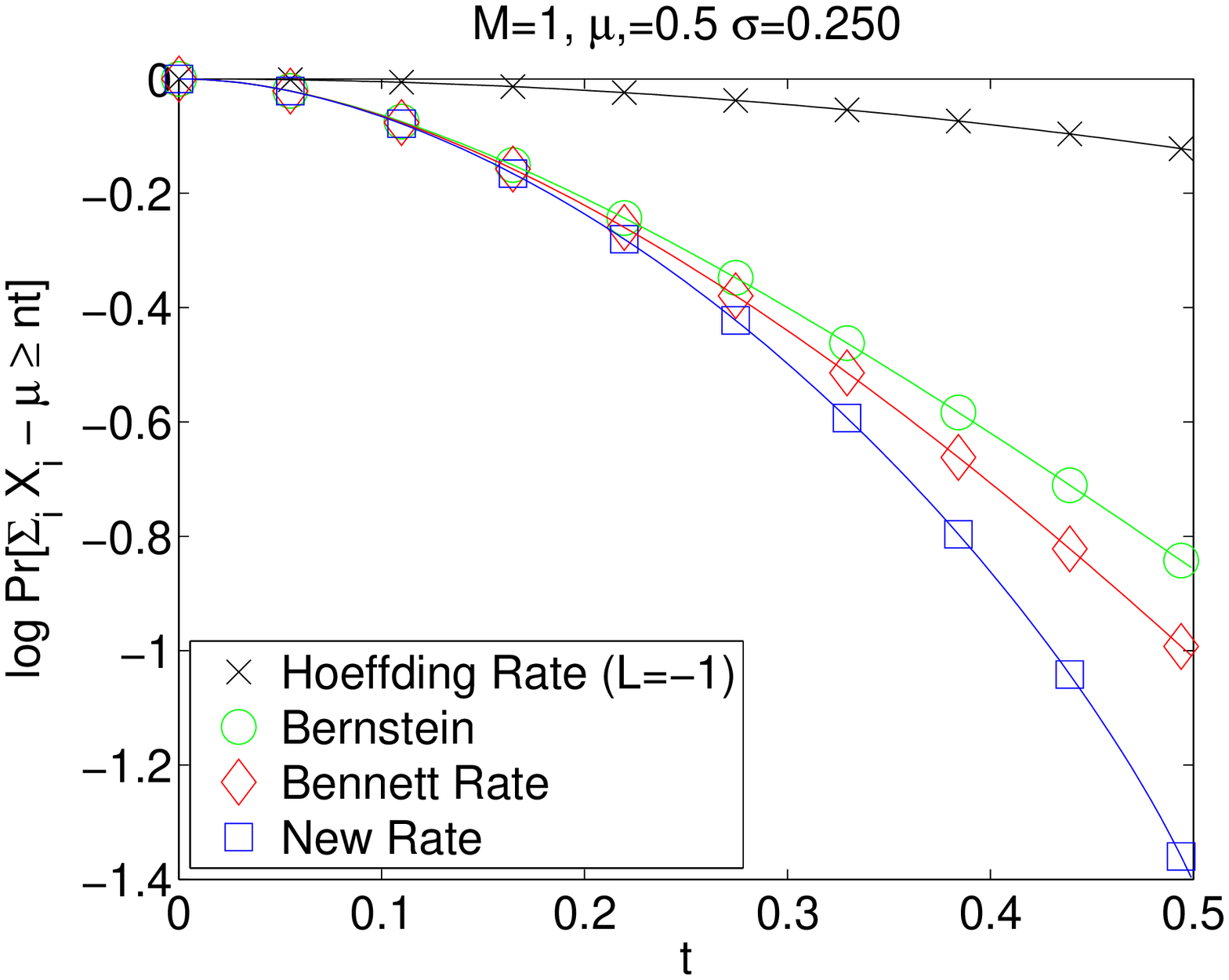} &
    \epsfxsize=2.46in
    \epsfbox{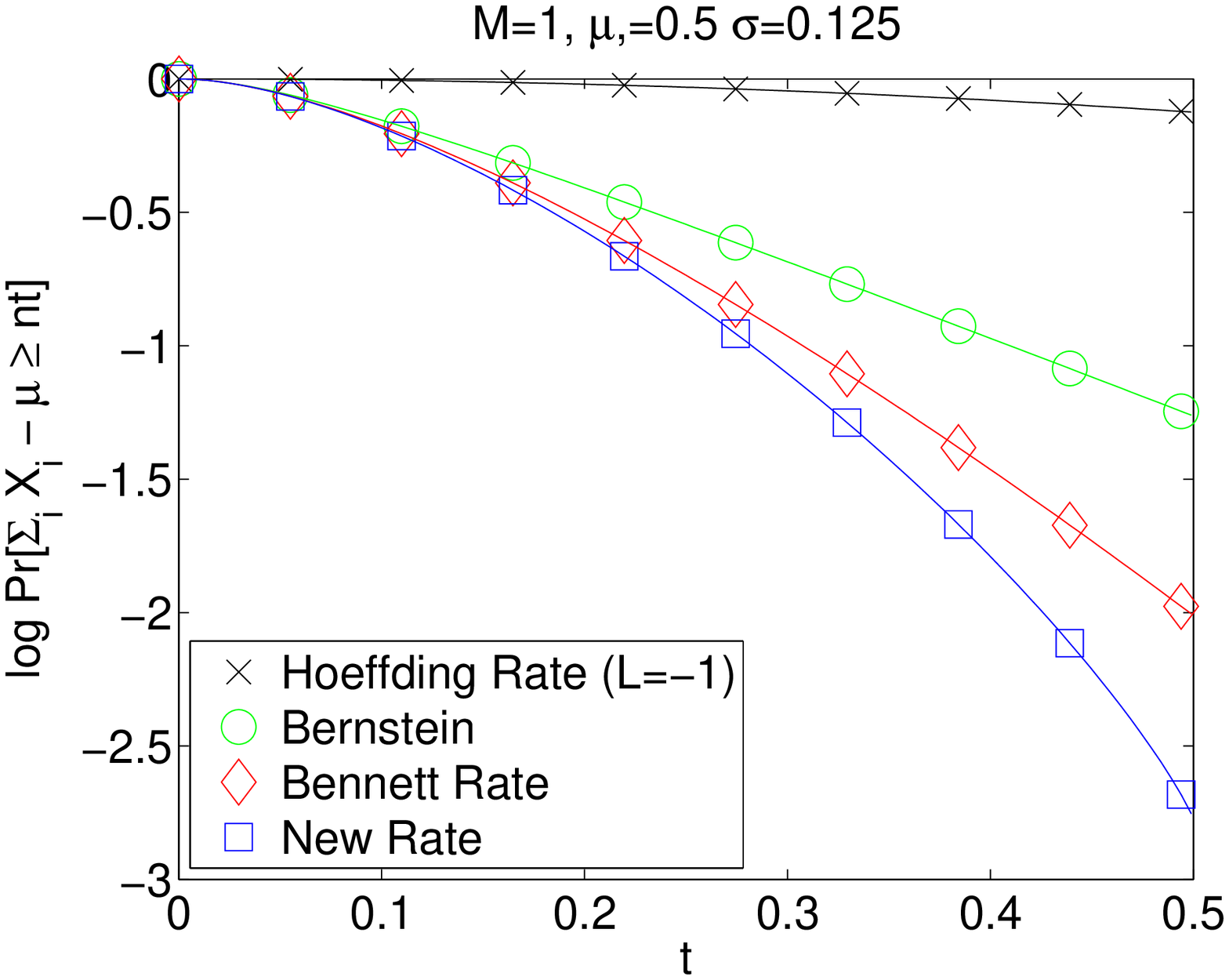} \\
    \epsfxsize=2.46in
    \epsfbox{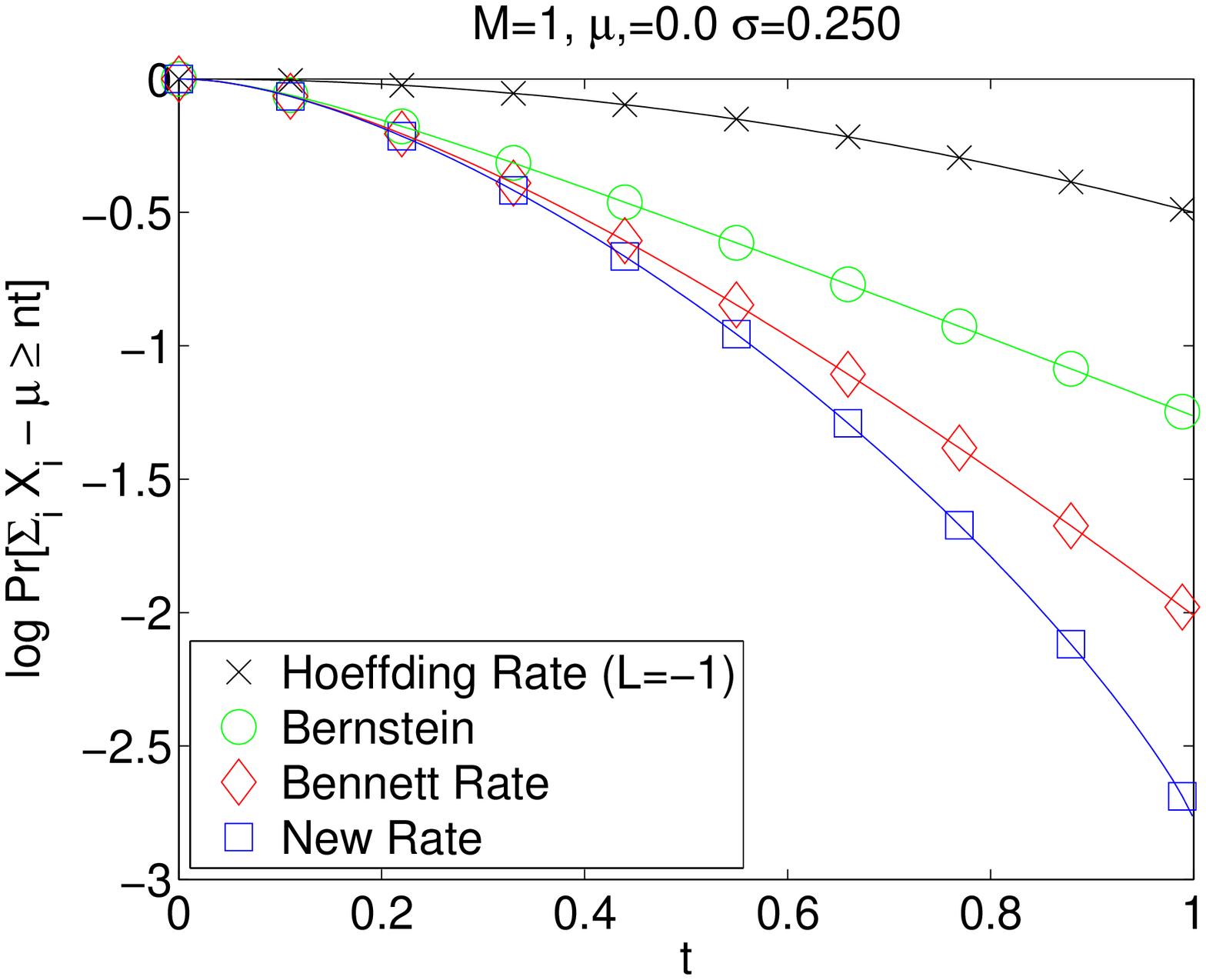} &
    \epsfxsize=2.46in
    \epsfbox{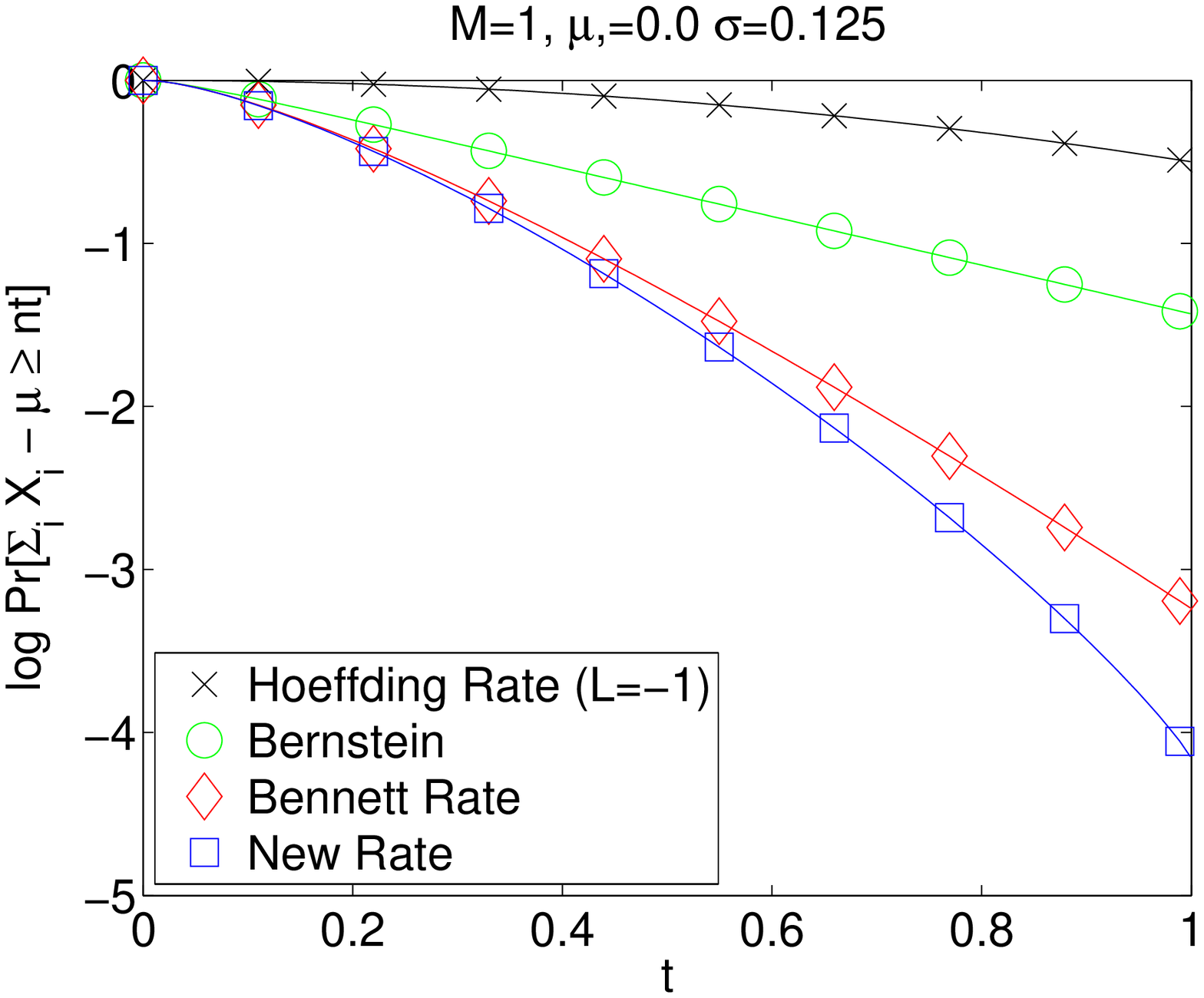} \\
    \epsfxsize=2.46in
    \epsfbox{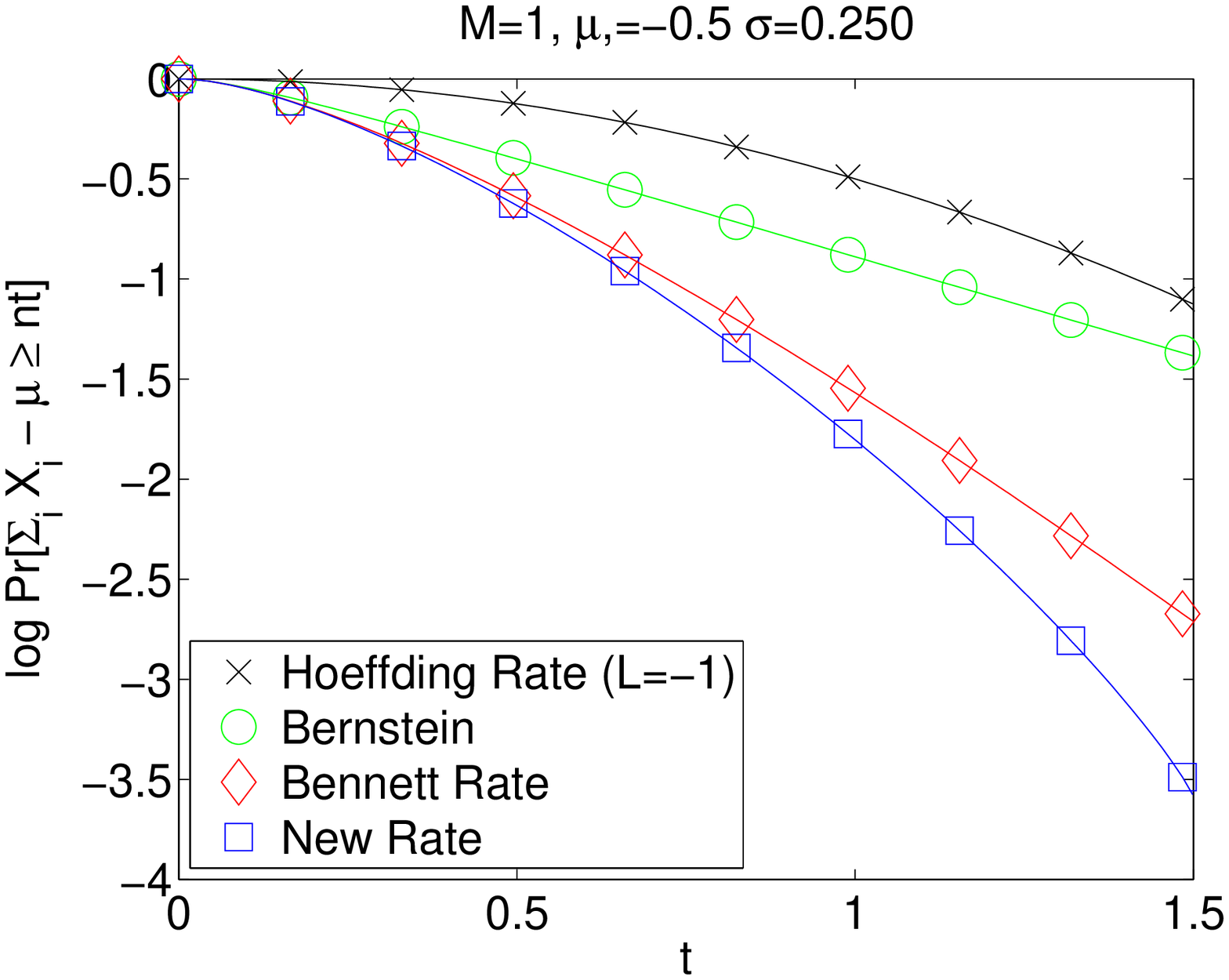} &
    \epsfxsize=2.46in
    \epsfbox{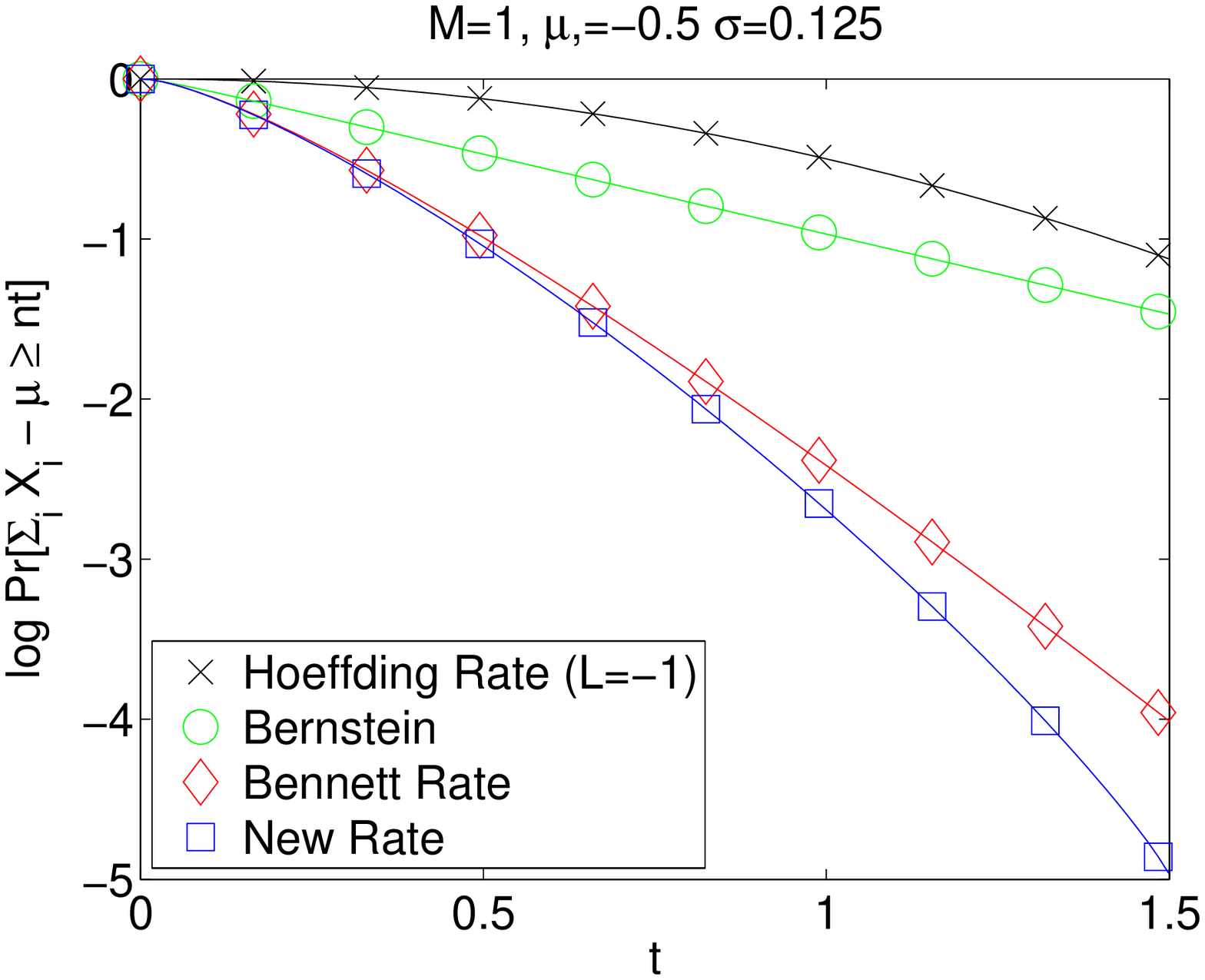} 
\end{tabular}
\caption[Comparing the four inequalities' convergence rates for homogeneous random variables under different $\mu$ and $\sigma$ settings. Lower values are better.]
{
Comparing the four inequalities' convergence rates for homogeneous random variables under different $\mu$ and $\sigma$ settings. Lower values are better.
}
\label{fig:bounds2}
\end{figure}

\begin{figure}[htbp]
  \center 
\setlength\tabcolsep{2pt}
  \begin{tabular}[b]{cc}
    \epsfxsize=2.46in
    \epsfbox{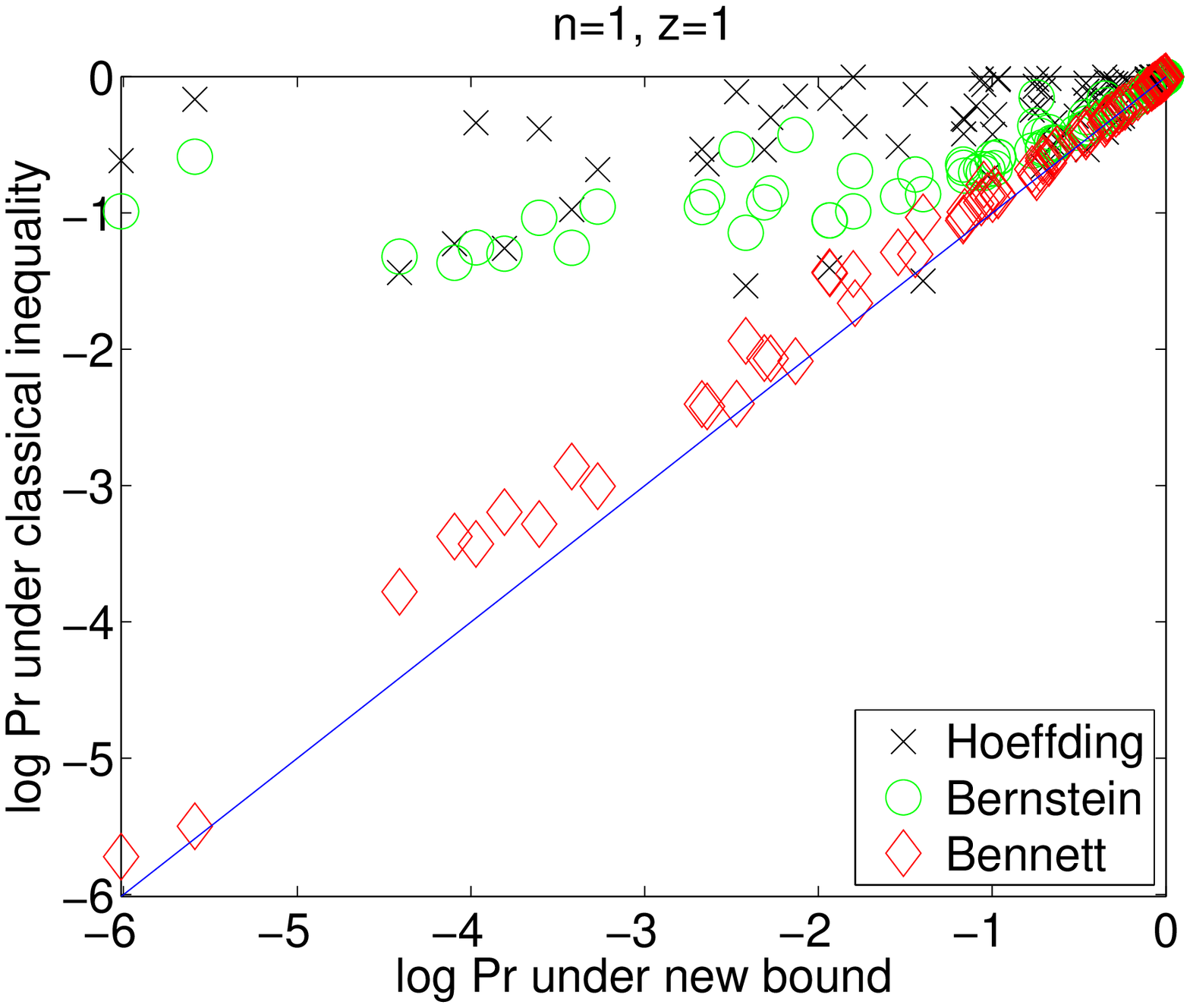} &
    \epsfxsize=2.46in
    \epsfbox{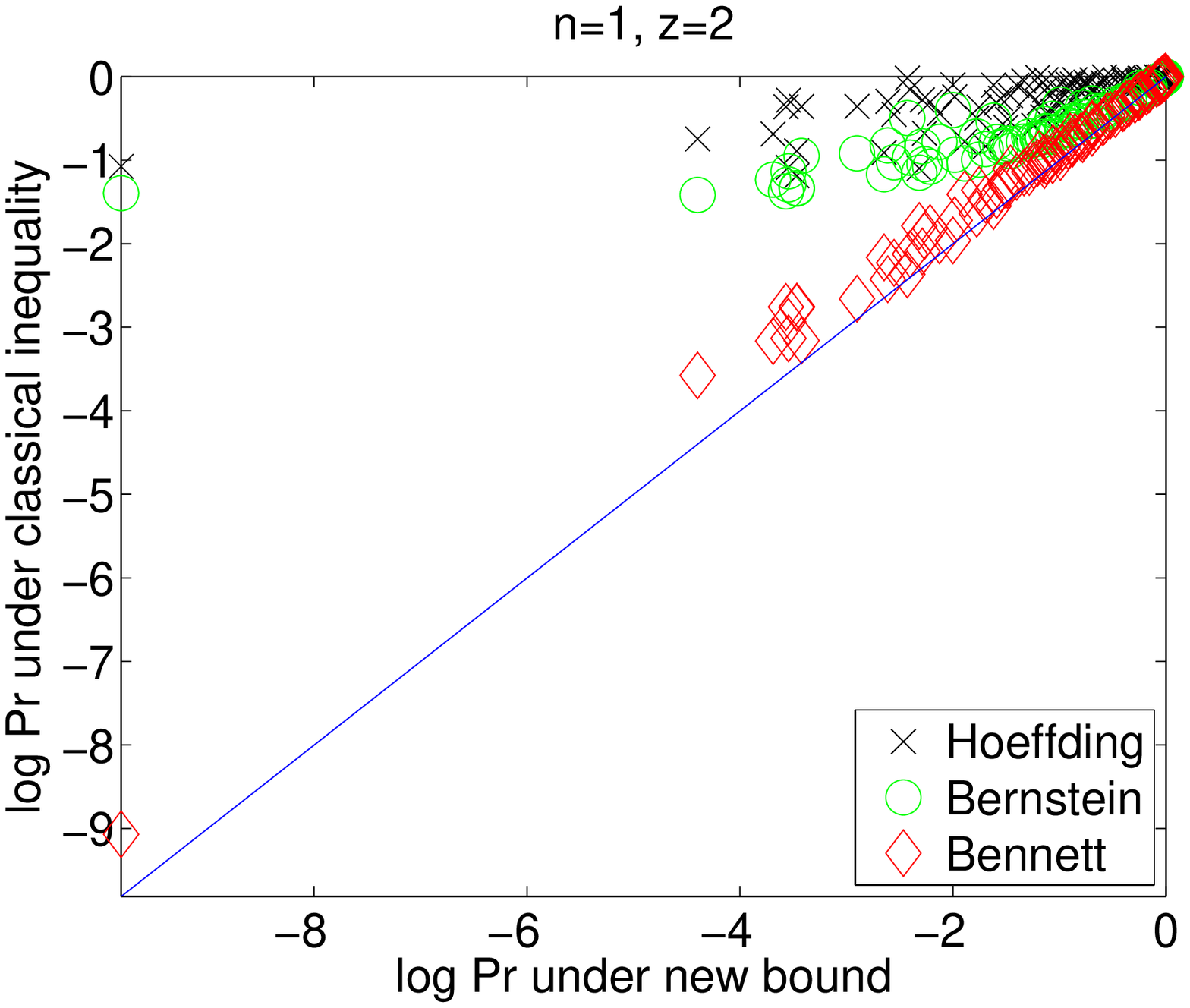} \\
    \epsfxsize=2.46in
    \epsfbox{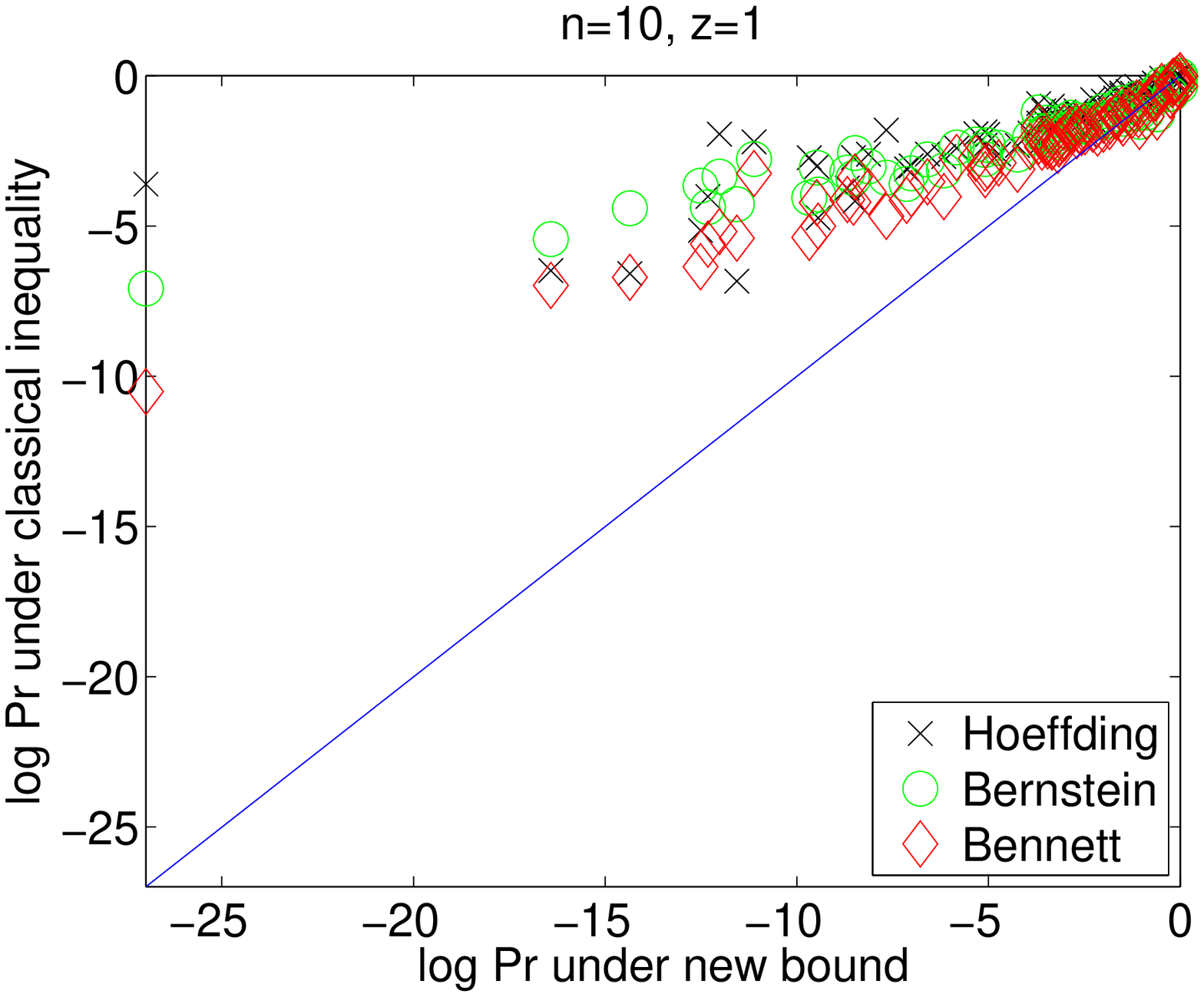} &
    \epsfxsize=2.46in
    \epsfbox{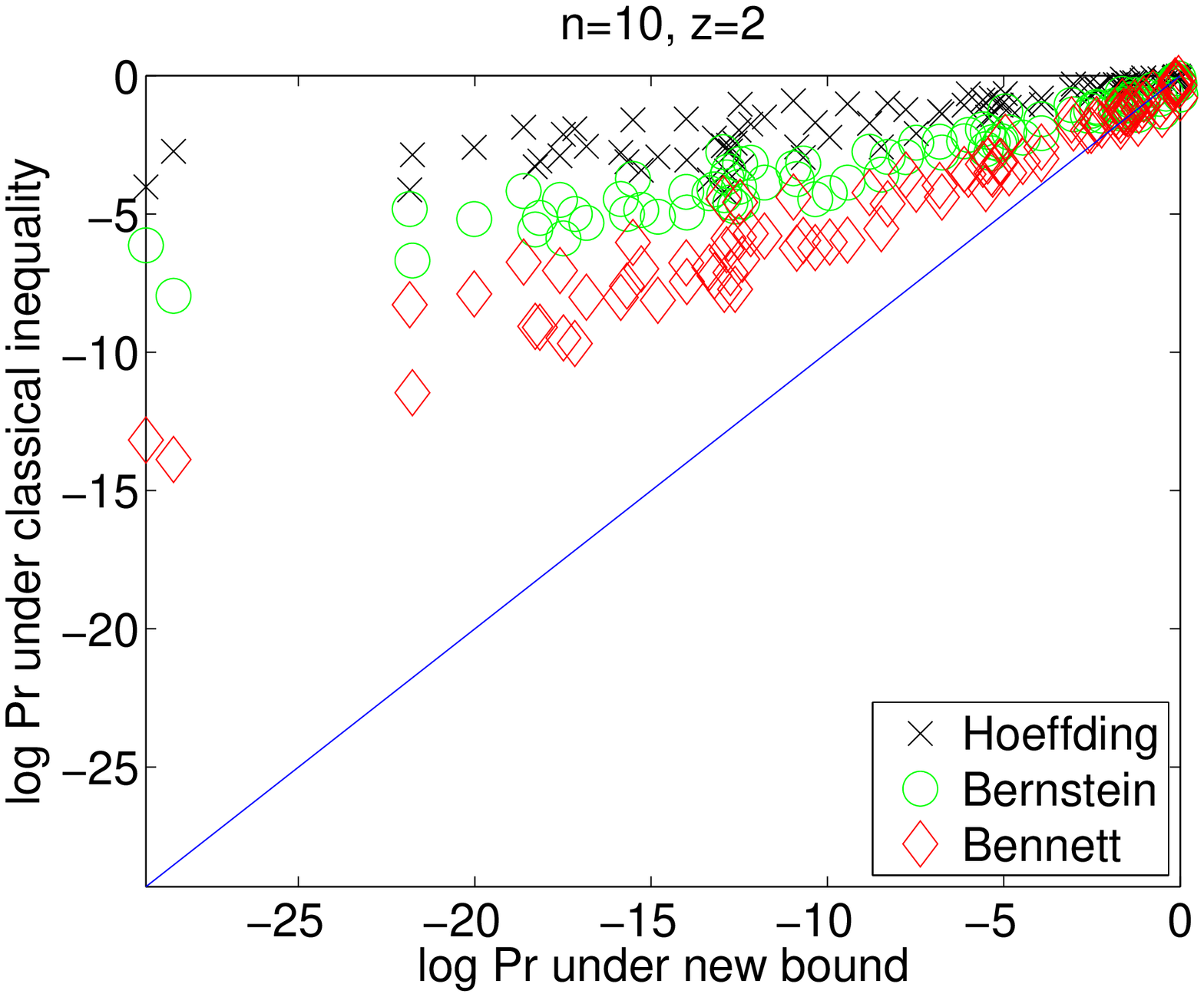} \\
    \epsfxsize=2.46in
    \epsfbox{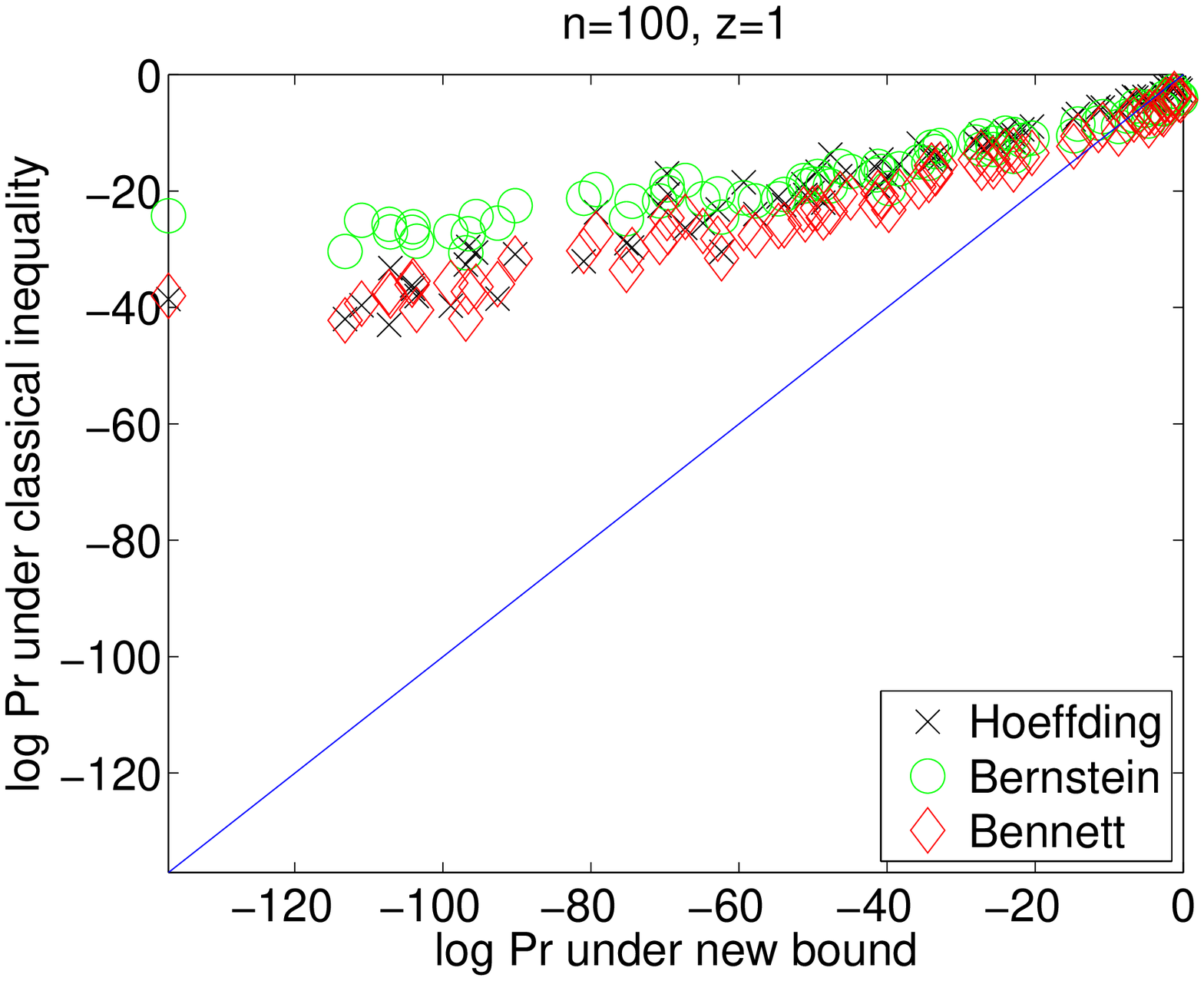} &
    \epsfxsize=2.46in
    \epsfbox{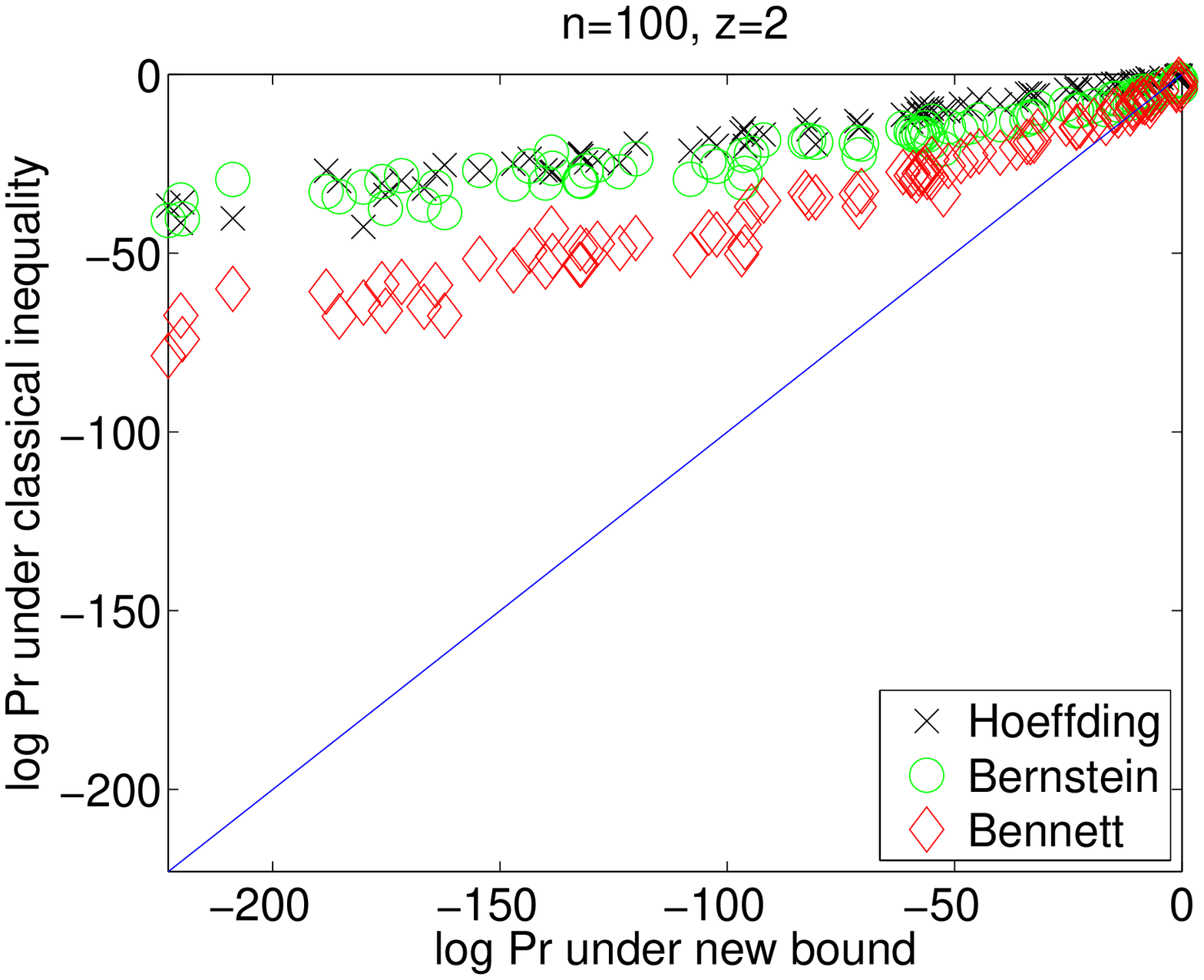} 
%\\
%    \epsfxsize=2.46in
%    \epsfbox{scatter100.1.eps} &
%    \epsfxsize=2.46in
%    \epsfbox{scatter100.2.eps} 
\end{tabular}
\caption[Comparing the classical inequalities with the new bound for heterogeneous random variables where the settings ($L_i$, $M_i$, $\mu_i$ and $\sigma_i$) and $t$ are drawn randomly. Specifically $\sigma_i$ is drawn from a uniform distribution over $(0,(M_i-L_i)/(2z))$. Various values of $n$ are also explored. A coordinate marker above the blue line indicates that the new bound is performing better than the classical inequality.]{Comparing the classical inequalities with the new bound for heterogeneous random variables where the settings ($L_i$, $M_i$, $\mu_i$ and $\sigma_i$) and $t$ are drawn randomly. Specifically $\sigma_i$ is drawn from a uniform distribution over $(0,(M_i-L_i)/(2z))$. Various values of $n$ are also explored. A coordinate marker above the blue line indicates that the new bound is performing better than the classical inequality.}
\label{fig:scatter1}
\end{figure}

\begin{figure}[htbp]
  \center
\setlength\tabcolsep{2pt}
  \begin{tabular}[b]{cc}
    \epsfxsize=2.46in
    \epsfbox{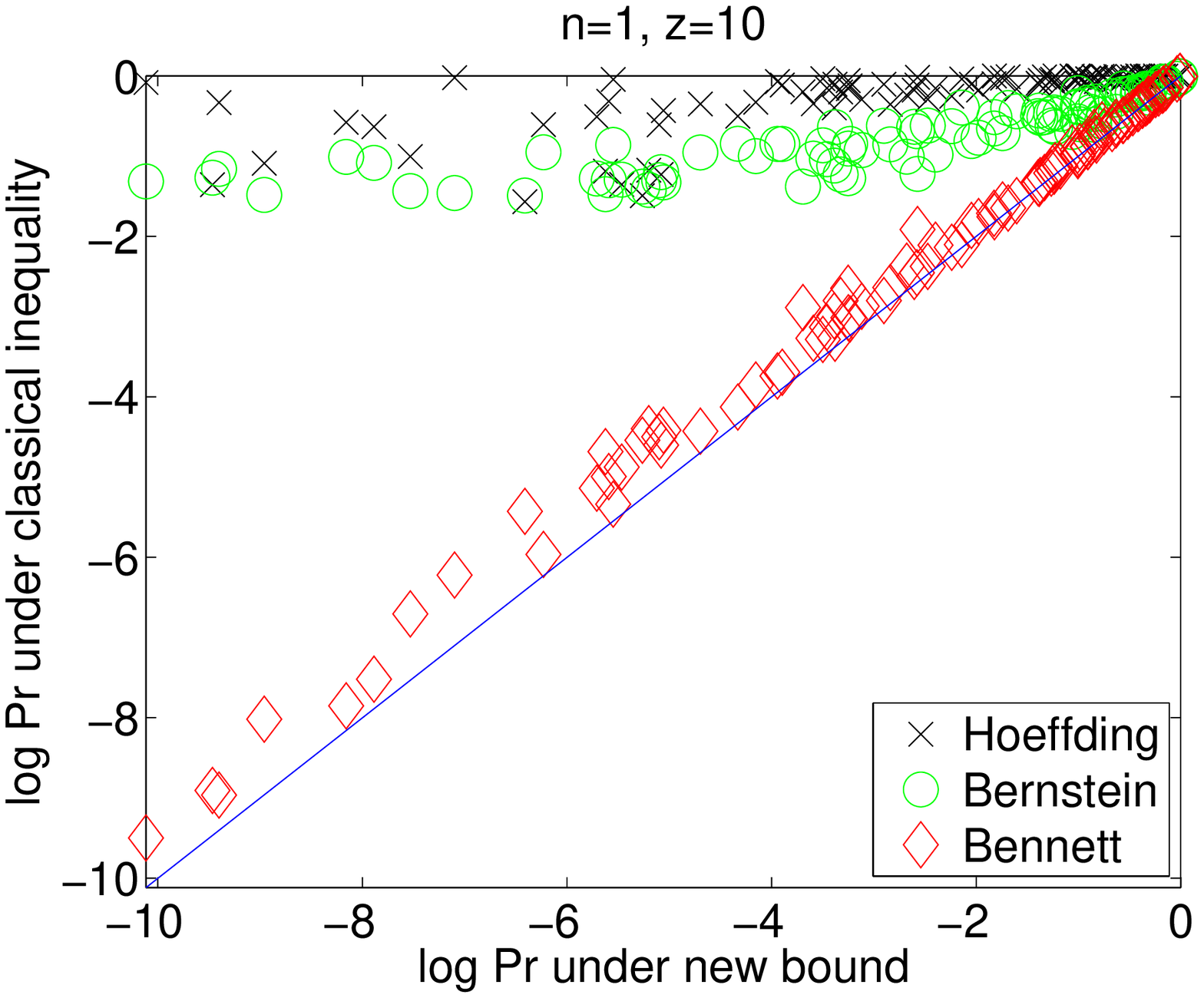} &
    \epsfxsize=2.46in
    \epsfbox{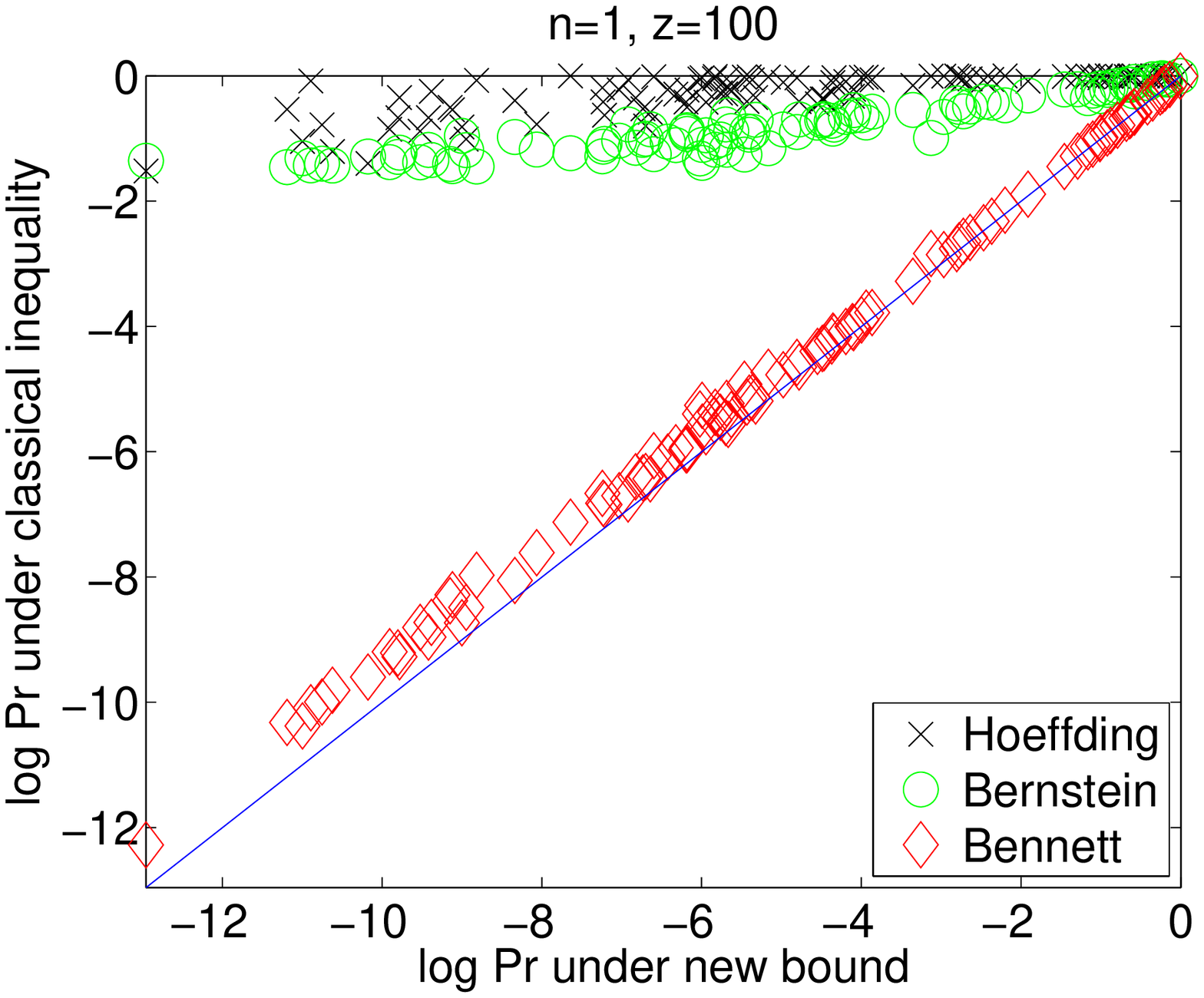} \\
    \epsfxsize=2.46in
    \epsfbox{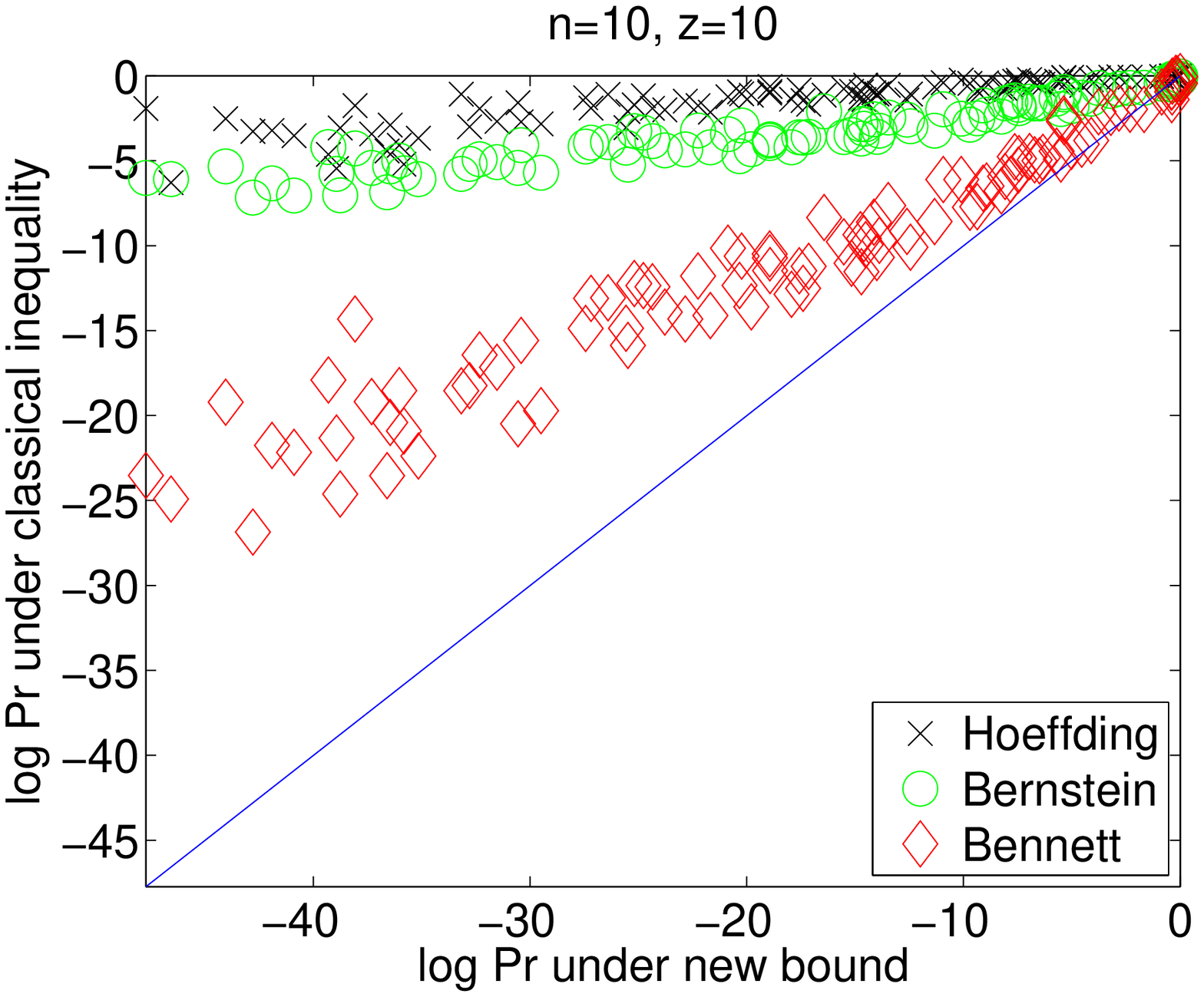} &
    \epsfxsize=2.46in
    \epsfbox{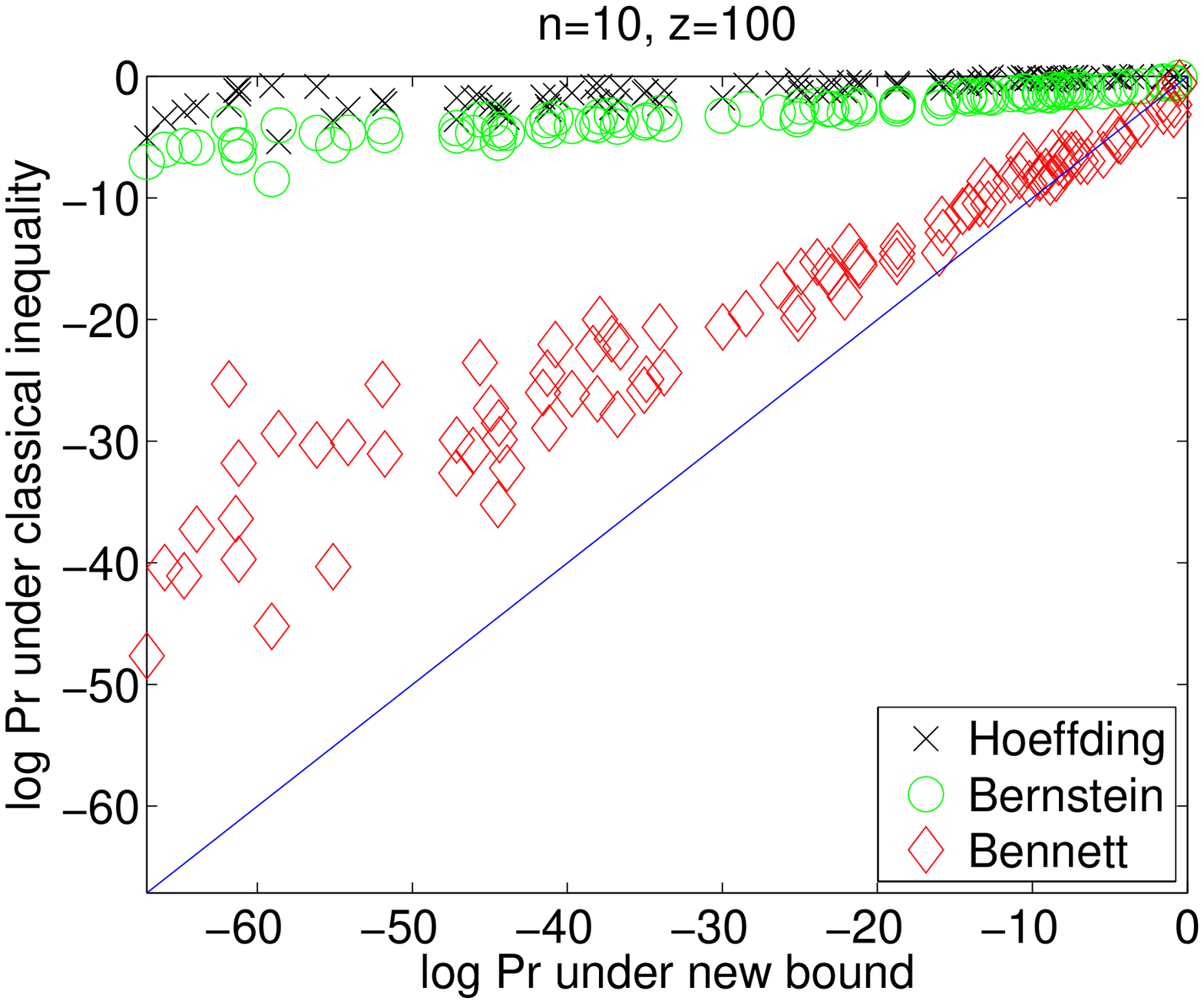} \\
    \epsfxsize=2.46in
    \epsfbox{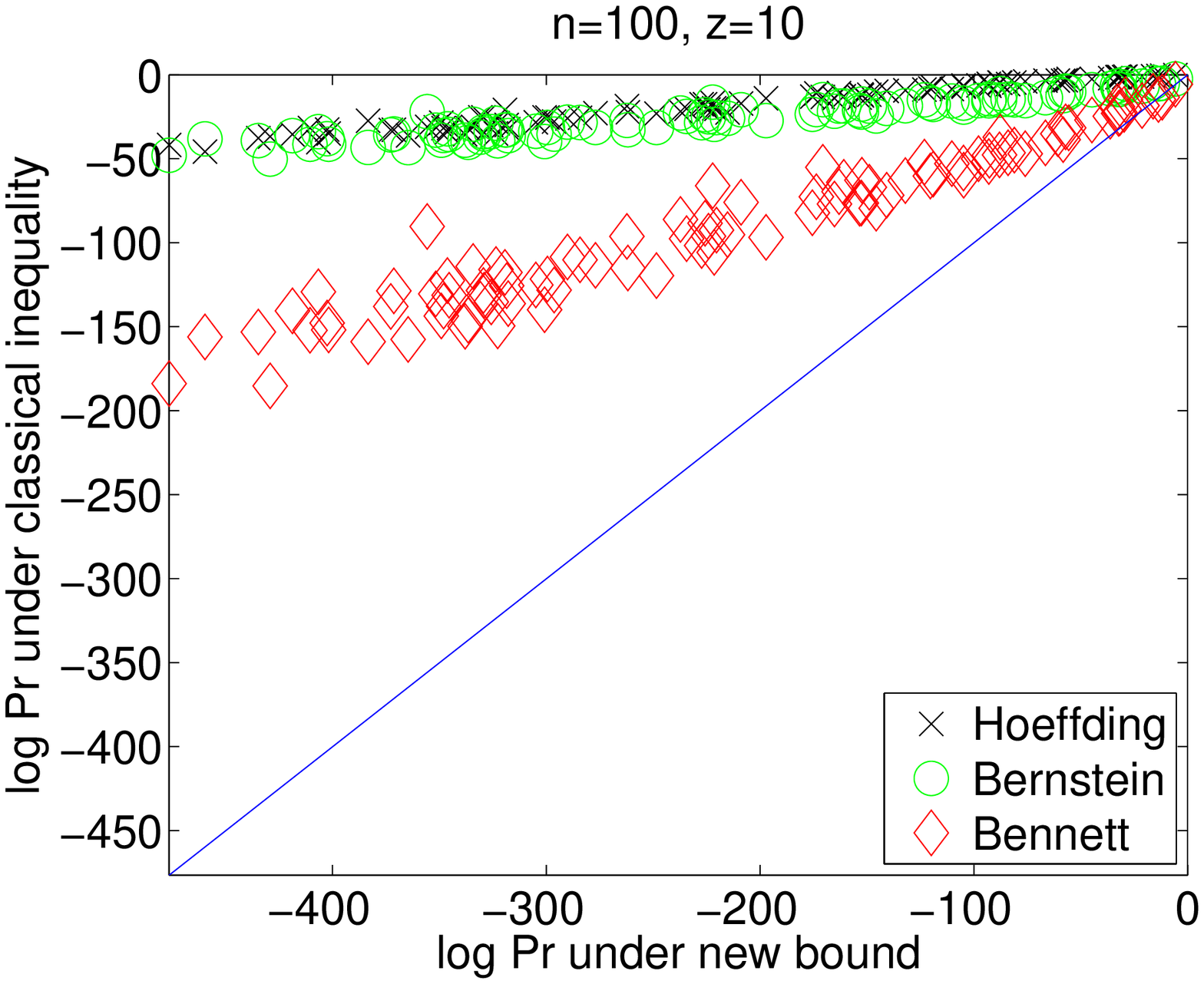} &
    \epsfxsize=2.46in
    \epsfbox{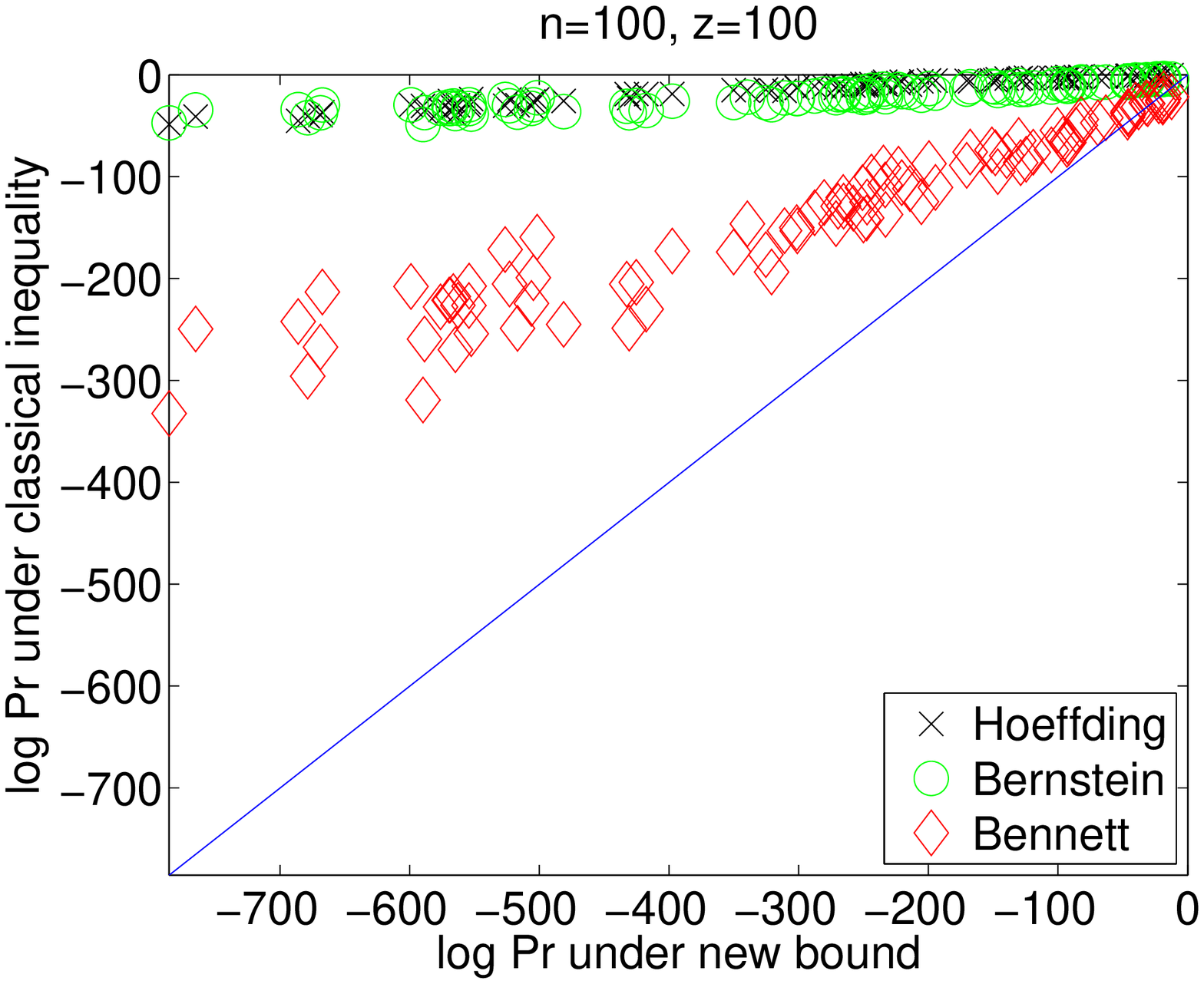}
%\\
%    \epsfxsize=2.46in
%    \epsfbox{scatter100.10.eps} &
%    \epsfxsize=2.46in
%    \epsfbox{scatter100.100.eps} 
\end{tabular}
\caption[Comparing the classical inequalities with the new bound for heterogeneous random variables where the settings ($L_i$, $M_i$, $\mu_i$ and $\sigma_i$) and $t$ are drawn randomly. Specifically $\sigma_i$ is drawn from a uniform distribution over $(0,(M_i-L_i)/(2z))$. Various values of $n$ are also explored. A coordinate marker above the blue line indicates that the new bound is performing better than the classical inequality.]{Comparing the classical inequalities with the new bound for heterogeneous random variables where the settings ($L_i$, $M_i$, $\mu_i$ and $\sigma_i$) and $t$ are drawn randomly. Specifically $\sigma_i$ is drawn from a uniform distribution over $(0,(M_i-L_i)/(2z))$. Various values of $n$ are also explored. A coordinate marker above the blue line indicates that the new bound is performing better than the classical inequality.}
\label{fig:scatter2}
\end{figure}

\begin{figure}[htbp]
  \center
\setlength\tabcolsep{2pt}
  \begin{tabular}[b]{c}
    \epsfxsize=3.8in
    \epsfbox{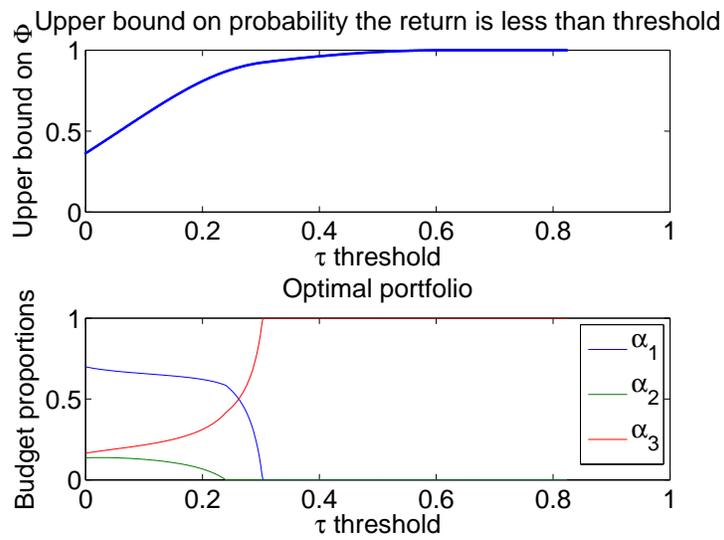}
\end{tabular}
\caption[The upper bound on the probability we will obtain a return
less than $\tau$ for 3 investments having a floor of 0 and
$\mu_1=0.3030, \mu_2=0.2400, \mu_3=0.6178$ with
$\sigma_1=0.2601,\sigma_2=0.5248,\sigma_3=0.7645$ (top panel).  The
corresponding optimal portfolio distribution is across $\tau$ values
(bottom panel).]{The upper bound on the probability we will obtain a
  return less than $\tau$ for 3 investments having a floor of 0 and
  $\mu_1=0.3030, \mu_2=0.2400, \mu_3=0.6178$ with
  $\sigma_1=0.2601,\sigma_2=0.5248,\sigma_3=0.7645$ (top panel).  The
  corresponding optimal portfolio distribution is plotted for a range
  of $\tau$ values (bottom panel).}
\label{fig:portfolio2}
\end{figure}

\begin{figure}[htbp]
  \center
\setlength\tabcolsep{2pt}
  \begin{tabular}[b]{c}
    \epsfxsize=3.8in
    \epsfbox{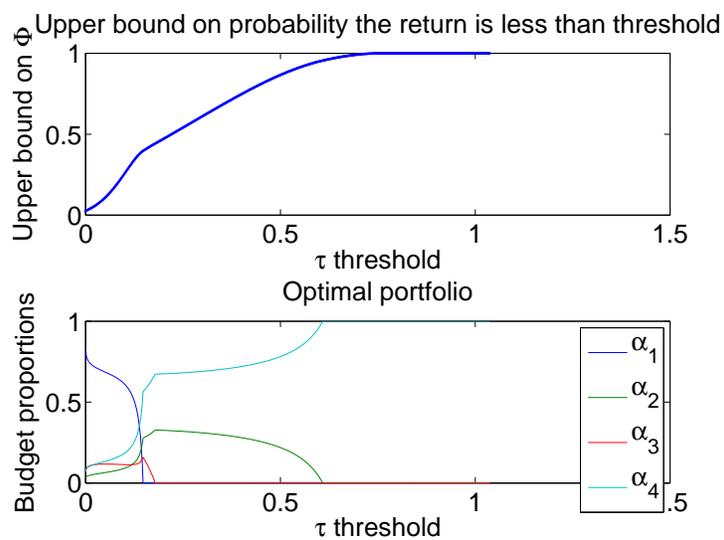}
\end{tabular}
\caption[The upper bound on the probability we will obtain a return
less than $\tau$ for 4 investments having a floor of 0 and
$\mu_1=0.1474, \mu_2=0.6088, \mu_3=0.1785, \mu_4=0.7585$ with
$\sigma_1=0.0593,\sigma_2=0.6218,\sigma_3=0.2183,\sigma_4=0.4597$ (top
panel). The corresponding optimal portfolio distribution is across
$\tau$ values (bottom panel).]{The upper bound on the probability we
  will obtain a return less than $\tau$ for 4 investments having a
  floor of 0 and $\mu_1=0.1474, \mu_2=0.6088, \mu_3=0.1785,
  \mu_4=0.7585$ with
  $\sigma_1=0.0593,\sigma_2=0.6218,\sigma_3=0.2183,\sigma_4=0.4597$
  (top panel). The corresponding optimal portfolio distribution is
  plotted for a range of $\tau$ values (bottom panel).}
\label{fig:portfolio1}
\end{figure}

\end{document}